\definecolor{green1}{rgb}{0.0, 0.5, 0.0}
\pgfplotsset{}
\newtheorem{theorem}{Theorem}
\newtheorem{lemma}{Lemma}
\newtheorem{proposition}[lemma]{Proposition}
\newtheorem{definition}[lemma]{Definition}
\newtheorem{remark}[lemma]{Remark}
\numberwithin{lemma}{section}
\newcommand{\cA}{{\mathcal A}}
\newcommand{\jD}{\langle D \rangle}
\numberwithin{equation}{section}
\newcommand{\R}{{\mathbb R}}
\renewcommand{\R}{\mathbb R}
\newcommand{\tu}{\tilde{u}}
\begin{document}

\title{Local well-posedness for quasilinear problems: a primer}

\author{Mihaela Ifrim}
\address{Department of Mathematics, University of Wisconsin, Madison}
\email{ifrim@wisc.edu}

\author{ Daniel Tataru}
\address{Department of Mathematics, University of California at Berkeley}
\email{tataru@math.berkeley.edu}

\begin{abstract}
Proving local well-posedness for quasilinear problems
in pde's presents a number of difficulties, some of which are 
universal and others of which are more problem specific.
On one hand, a common standard for what well-posedness should mean  has existed for a long time, going back to Hadamard.
On the other hand, in terms of getting there, there are by now both many variations, but also many misconceptions.

The aim of these expository notes is to collect a number of both classical
and more recent ideas in this direction, and to assemble them into a cohesive road-map that can be then adapted to the reader's problem of choice. 

\end{abstract}

\subjclass{Primary: 35L45, 35L50, 35L60. 
}
\keywords{quasilinear evolutions, local well-posedness, frequency envelopes}
\maketitle

\setcounter{tocdepth}{1}
\tableofcontents

\maketitle

\section{Introduction}

Local well-posedness is the first question to ask for any evolution problem in partial differential equations. These notes, prepared by the authors for a summer school at MSRI~\cite{msri} in 2020, aim to
discuss ideas and strategies for local well-posedness in quasilinear and fully nonlinear evolution equations, primarily of hyperbolic type. We hope to persuade the reader that
the structure presented here should be adopted as the standard for proving these results.
Of course, there are many possible variations, and we try to point out some of them in our many remarks. While a few of the ideas here can be found in several of the classical books, see e.g. \cite{Taylor},\cite{Hormander}, \cite{Chemin},\cite{Sogge}, some of the others have appeared only in articles devoted to specific problems, 
and have never been collected together, to the best of our knowledge.

\subsection{Nonlinear evolutions} 
For our exposition we will adopt a two track structure, where we will  
broadly discuss ideas for a general problem, and in parallel implement these ideas on a simple, classical concrete example. 

\bigskip

Our general problem will be a nonlinear partial differential equation
of the form 
\begin{equation}\label{gen-eq}
u_t = N(u), \qquad u(0) = u_0,   
\end{equation}
i.e. a first order system in time, where we think of $u$ as a scalar or a vector valued function belonging to a scale of either real or complex Sobolev spaces. This scale will be chosen to be $H^s:=H^s(\R^n)$ for the purpose of this discussion, though in practice it often has to be adapted
to the class of problems to be considered. The nonlinearity $N$ represents a nonlinear function 
of $u$ and its derivatives,
\[
N(u) = N (\{ \partial^\alpha u\}_{|\alpha| \leq k}) ,
\]
where we will refer to $k$ as the order of the evolution. Here typical examples include $k=1$ (hyperbolic equations), $k = 2$ (Schr\"odinger 
type evolutions) and $k=3$ (KdV type evolutions). But many other 
situations arise in models which are nonlocal, e.g. in water waves 
one encounters $k = \frac12$ for gravity waves respectively $k = \frac32$ for capillary waves.  

Some problems are most naturally formulated as second order evolutions in time, for instance nonlinear wave equations. While some such problems admit also good first order in time formulations (e.g. the compressible Euler flow), it is sometimes better to treat them as second order. 
Regardless, our road-map still applies, with obvious adjustments.

\bigskip

Our model problem will be a classical first order symmetric hyperbolic system
in $\R \times \R^n$, of the form
\begin{equation}\label{sym-hyp}
\partial_t  u = \cA^j(u) \partial_j u, \qquad u(0) = u_0  ,  
\end{equation}
where $u$ takes values in $\R^m$ and the $m\times m $ matrices $\cA^j$ are symmetric, and smooth as functions of $u$. Here the order of the nonlinearity $N$ is $k = 1$, and the scale of Sobolev spaces to be used is indeed the Sobolev scale.

\bigskip

\subsection{What is well-posedness ?} 
To set the expectations for our problems, we recall the classical Hadamard standard for well-posedness,
formulated relative to our chosen scale of spaces.

\begin{definition}
The problem \eqref{gen-eq} is locally well-posed in a Sobolev space 
$H^s (\mathbb{R}^n)$ if the following properties are satisfied:
\begin{description}
\item[(i) Existence] For each $u_0 \in H^s$ there exists some time $T > 0$ and  a solution $u \in C([0,T]; H^s)$.

\item[(ii) Uniqueness] The above solution is unique in $C([0,T]; H^s)$.

\item[(iii) Continuous dependence] The data to solution map is continuous from $u_0 \in H^s$ to $u \in C([0,T];H^s)$.
\end{description}
\end{definition}

As a historical remark, we note that Hadamard primarily 
discussed the question of well-posedness in the context of 
linear pde's, specifically for the Laplace and wave equation,
beginning with an incipient form in \cite{hadamard}, 
and a more developed form in \cite{hadamard1923lectures}.
It is in the latter reference where the continuous dependence 
is discussed, seemingly inspired by Cauchy's theorem for ode's.

\medskip

The above definition should not be taken as universal, but rather as a good starting point, which may need to be adjusted 
depending on the problem. Consider for instance the uniqueness
statement, which, as given in (ii), is in the strongest form, which is often referred to as \emph{unconditional uniqueness}.
Often this may need to be relaxed somewhat, particularly when 
low regularity solutions are concerned. Some common variations concerning uniqueness are as follows:

\begin{enumerate}[label=\alph*)]
    \item The solutions $u$ in (i) are shown to belong to a smaller space, $X^s_T \subset C([0,T]; H^s(\mathbb{R}^n))$, and then the uniqueness in (ii) holds in the same class.
   
   \item Unconditional uniqueness holds apriori only in a more regular class 
   $H^{N}$ with $N > s$, but the data to solution map extends continuously as a map from $H^s$ to $C([0,T]; H^s)$.
\end{enumerate}

Since we are discussing nonlinear equations here, the lifespan 
of the solutions need not be infinite, i.e.  there is always
the possibility that solutions may blow up in finite time.
In particular, in the context of well-posed problems it is
natural to consider the notion of \emph{maximal lifespan},
which is the largest $T$ for 
which the solution exists in 
$C([0,T); H^s)$; here the limit of $u(t)$ as $t$ approaches
$T$ cannot exist, or else the solution $u$ may be continued further. 

In this context, the last property in the definition should be interpreted to mean in particular that, for a solution $u\in C([0,T];H^s)$, small perturbations of the initial data $u_0$ yield solutions which are also defined in $[0,T]$. This in turn  implies that the maximal lifespan $T = T(u_0)$ is lower semicontinuous as a function of $u_0 \in H^s(\mathbb{R}^n)$.

In view of the above discussion, it is always interesting to provide more precise assertions about the lifespan of solutions,
or, equivalently, continuation (or blow-up) criteria for the solutions. Some interesting examples are as follows:

\begin{enumerate}[label=\alph*)]
    \item The lifespan $T(u_0)$ is bounded from below 
uniformly for data in a bounded set, 
\[
T(u_0) \geq C(\|u_0\|_{H^s}) > 0.
\]
This implies a blow-up criteria as follows:
\[
\lim_{t \to T(u_0)} \| u(t) \|_{H^s} = \infty.
\]
    
\item The blow-up may be characterized in terms of weaker bounds,
\[
\lim_{t \to T(u_0)} \| u(t) \|_{Y} = \infty.
\]
relative to a Banach topology $Y \supset H^s$, or perhaps a time integrated version thereof
\[
\int_{0}^{T(u_0)}  \| u(t) \|_{Y} dt = \infty.
\]
\end{enumerate}

To conclude our discussion of the above definition,  we note that many well-posedness statements also provide additional properties for the flow:

\begin{description}
    \item[Higher regularity] if the initial data has more regularity $u_0 \in H^\sigma$
    with $\sigma > s$, then this regularity carries over to the solution, $u \in C[(0,T);H^\sigma]$, with bounds and lifespan bounds depending only on the $H^s$ size of the data.
    \item[Weak Lipschitz bounds] on bounded sets in $H^s$, the flow is Lipschitz in a weaker topology (e.g. up to $H^{s-1}$ in our model problem).
\end{description}
Both of these properties are often an integral part of a complete theory, and frequently also serve as 
intermediate steps in establishing the main well-posedness result.

In all of the above discussion, a common denominator remains the fact that the solution to data map is 
locally continuous, but not uniformly continuous. It is very natural indeed to redefine 
(expand) the notion of quasilinear evolution equations to include all flows which share
this property.

\bigskip

In many problems of this type, one is interested not only in local well-posedness
in some Sobolev space $H^s$, but also in lowering the exponent $s$ as much as possible.
We will refer to such solutions as rough solutions. Then, a natural question is what kind of regularity thresholds should one expect or aim for in such problems? One clue in this direction comes from the scaling symmetry, whenever available. As an example, our model problem 
exhibits the scaling symmetry
\[
u(t,x) \to u(\lambda t, \lambda x), \qquad \lambda > 0.
\]
The scale invariant initial data Sobolev space corresponding to this symmetry is
the homogeneous space $\dot H^{s_c}$, where $s_c = n/2$. This space is called the critical Sobolev space, and should heuristically be thought of as an absolute lower bound for any reasonable well-posedness result. Whereas in some semilinear dispersive evolutions
one can actually reach this threshold, in nonlinear flows it seems to be 
out of reach in general.

\subsection{ A set of results for the model problem}
In order to state the results, we begin with a discussion of \emph{control parameters}. We will use two such control parameters. The first one is
\[
A = \| u \|_{L^\infty}.
\]
This is a scale invariant quantity, which appears in the implicit constants in all of our bounds.
Our second control parameter is
\[
B = \| \nabla u \|_{L^\infty},
\]
which instead will be shown to control the energy growth in all the energy estimates. Precisely, the norm $B$, plays the role of the $Y$ norm mentioned in the discussion above.

\bigskip

The primary well-posedness result for the model problem is as follows:

\begin{theorem}\label{t:lwp}
The equation \eqref{sym-hyp} is locally well-posed in $H^s$  in the Hadamard sense for $s > \frac{d}{2}+1$.
\end{theorem}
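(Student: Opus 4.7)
The plan is to follow the classical four-step road-map adapted to quasilinear hyperbolic systems: (1) a priori energy estimates, (2) construction of rough solutions by regularization and passage to the limit, (3) uniqueness, and (4) continuous dependence, with this last step being the main source of difficulty.

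For the a priori estimates, I would differentiate the equation $k$ times with $|k| \leq s$ (for integer $s$; for fractional $s$ use Littlewood-Paley or fractional derivatives), pair with $\partial^k u$ in $L^2$, symmetrize using that $\cA^j$ is symmetric, and track the commutator $[\partial^k, \cA^j(u)]\partial_j u$. Moser/Kato-Ponce type commutator bounds applied in the Sobolev algebra $H^s$ for $s > d/2+1$ yield
\begin{equation*}
\frac{d}{dt}\|u\|_{H^s}^2 \lesssim_A B\, \|u\|_{H^s}^2,
\end{equation*}
with $A=\|u\|_{L^\infty}$, $B=\|\nabla u\|_{L^\infty}$ as in the paper, and $B \lesssim \|u\|_{H^s}$ by Sobolev embedding. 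Gronwall produces a lifespan $T = T(\|u_0\|_{H^s}) > 0$ and a blow-up criterion in terms of $\int_0^T B\, dt$.

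For existence I would regularize the equation, either via Friedrichs mollifiers $J_\epsilon$ producing $\partial_t u_\epsilon = J_\epsilon \cA^j(J_\epsilon u_\epsilon)\partial_j J_\epsilon u_\epsilon$, or via a frequency truncation $P_{\leq N}$. Each regularized problem is an ODE on $H^s$ with Lipschitz right-hand side, so it has a classical local solution; the energy argument applied uniformly in $\epsilon$ gives a common lifespan and uniform $H^s$ bounds. I would then show that $\{u_\epsilon\}$ is Cauchy in $C([0,T];L^2)$ by writing the equation for a difference of two regularizations, treating it as a linear symmetric hyperbolic system with source, and using the uniform $W^{1,\infty}$ control from $s > d/2+1$. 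Interpolation with the uniform $H^s$ bound gives convergence in $C([0,T];H^{s'})$ for every $s'<s$, and weak-$*$ compactness provides a limit $u \in L^\infty([0,T];H^s)$. Uniqueness is obtained by the same $L^2$ difference estimate, yielding in addition the weak Lipschitz dependence $H^s \to H^{s-1}$ mentioned in the excerpt.

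The hard part is upgrading the limit from $L^\infty H^s \cap C H^{s-1}$ to $C([0,T];H^s)$ and, together with it, obtaining continuous dependence on the data in the strong $H^s$ topology; the weak Lipschitz bound is not enough because the flow is not uniformly continuous in $H^s$. I would implement this via a frequency envelope / Bona-Smith argument: associate to each datum $u_0$ an admissible envelope $\{c_k\}$ with $\|P_k u_0\|_{H^s} \leq c_k$ and $\sum c_k^2 \lesssim \|u_0\|_{H^s}^2$, solve the regularized problems with data $P_{\leq k} u_0$, and prove \emph{envelope} bounds $\|P_k u(t)\|_{H^s} \lesssim c_k$ together with a high-frequency gain $\|u - u_{\leq k}\|_{L^2} \lesssim 2^{-ks} c_k$. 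Summing these two bounds yields strong continuity in $H^s$ for a single solution, and applying the same envelope estimates to a sequence $u_0^{(n)} \to u_0$ in $H^s$ (using a common majorizing envelope via equi-integrability of the Littlewood-Paley pieces) yields continuous dependence. Standard arguments then give higher regularity propagation and the blow-up criterion $\int_0^{T(u_0)} \|\nabla u\|_{L^\infty}\, dt = \infty$.
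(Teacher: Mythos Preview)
Your proposal is correct and arrives at the same destination as the paper, but the routes diverge in two places worth noting.

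For the energy estimates and the difference bounds, you use the classical direct approach (differentiate, integrate by parts using the symmetry of $\cA^j$, control commutators via Kato--Ponce/Moser). The paper instead casts everything in paradifferential form: it writes the equation as $\partial_t u = T_{DN(u)} u + F(u)$, proves that $F$ is perturbative (Proposition~\ref{p:F}), and reduces the energy bound to one for the linear paradifferential flow (Proposition~\ref{p:para}). The paper explicitly acknowledges that for this particular model problem a simple integration by parts suffices and that the paradifferential apparatus is not strictly needed; its point is that the paradifferential viewpoint is what generalizes to harder problems (dispersive quasilinear flows, water waves, geometric equations), where the direct commutator approach can break down.

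For existence, you regularize the \emph{equation} (Friedrichs mollifiers or Galerkin projection), solve an ODE in $H^s$, and pass to the limit. The paper's two primary constructions are different: an iteration scheme based on solving the linear paradifferential equation at each step, and a time-discretization (regularize-then-Euler) scheme. Your method is precisely one of the ``other strategies'' the paper lists in Section~\ref{s:existence} but does not develop. The trade-off: your approach is more familiar and self-contained, while the paper's iterative scheme avoids having to re-derive uniform energy estimates for a modified equation, and the time-discretization scheme reduces everything to a single algebraic step plus the already-proved energy bound.

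For continuous dependence, your frequency-envelope/Bona--Smith outline is essentially the same as the paper's Section~6; the paper's version is organized around the triangle inequality $\|u_j - u\| \leq \|u_j^h - u^h\| + \|u^h - u\| + \|u_j^h - u_j\|$ together with the bound $\|u - u^h\|_{C(0,T;H^s)} \lesssim c_{\geq h}$, and the observation that one can choose envelopes with $c^j \to c$ in $\ell^2$, which is equivalent to your ``common majorizing envelope'' formulation.
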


The reader will notice that this result is one derivative above scaling. It is also optimal in some cases, including the scalar case (where the problem can be solved locally using the method of characteristics), but not optimal in many other cases
where the system is dispersive. 

For the uniqueness result we have in effect a stronger statement, which only requires Lipschitz bounds for $u$. This however does not improve the scaling comparison relative to the critical spaces:

\begin{theorem}\label{t:unique}
Uniqueness holds in the Lipschitz class, and we have the $L^2$ difference bound
\begin{equation}
\| (u_1- u_2)(t)\|_{L^2} \lesssim e^{C(A) \int _0^t B(s)\, ds}    \| (u_1- u_2)(0)\|_{L^2}.
\end{equation}
\end{theorem}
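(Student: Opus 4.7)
The plan is to run a direct $L^2$ energy estimate on the difference $v := u_1 - u_2$. Subtracting the equations and factoring, one gets
\[
\partial_t v = \cA^j(u_1)\, \partial_j v + \bigl(\cA^j(u_1) - \cA^j(u_2)\bigr)\, \partial_j u_2,
\]
which is of the schematic form ``symmetric hyperbolic linear equation for $v$ with coefficients depending on $u_1$, plus a source term that is itself controlled by $v$.'' The two control parameters $A$ and $B$ should be interpreted as bounding $u_1,u_2$ and their gradients simultaneously; the smoothness of $\cA^j$ then gives $|\cA^j(u_1)-\cA^j(u_2)|\lesssim C(A)|v|$.

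\textbf{Energy estimate.} I would compute $\frac{d}{dt}\|v\|_{L^2}^2 = 2\int v\cdot \partial_t v\, dx$ and handle the two contributions separately. For the principal part, symmetry of $\cA^j$ allows one to rewrite
\[
2\int v\cdot \cA^j(u_1)\partial_j v\, dx = \int \cA^j(u_1)\partial_j(v\cdot v)\, dx = -\int \partial_j \cA^j(u_1)\, |v|^2\, dx,
\]
and since $\partial_j \cA^j(u_1) = D\cA^j(u_1)\,\partial_j u_1$, this is bounded by $C(A)B(t)\|v\|_{L^2}^2$. For the source term, one uses the pointwise Lipschitz bound $|\cA^j(u_1) - \cA^j(u_2)|\lesssim C(A)|v|$ together with $|\partial_j u_2|\le B$, obtaining the same type of bound $C(A)B(t)\|v\|_{L^2}^2$ via Cauchy--Schwarz. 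Altogether,
\[
\frac{d}{dt}\|v(t)\|_{L^2}^2 \le C(A)B(t)\,\|v(t)\|_{L^2}^2,
\]
and Gronwall's inequality delivers the claimed exponential bound.

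\textbf{Caveats and the main subtlety.} The computation above is formal for merely Lipschitz solutions, so the only genuine issue is justifying the differentiation in $t$, the integration by parts, and the use of the Leibniz rule in $\partial_j \cA^j(u_1)$. I would handle this by mollifying the equations (or by the standard approximation argument: regularize $u_1,u_2$ in $x$, commute mollifiers past the equation picking up commutator terms that vanish strongly in $L^2$ because $\nabla u_1, \nabla u_2\in L^\infty$), so that all manipulations are legitimate at the regularized level, and then pass to the limit. Unconditional uniqueness in the $L^\infty_t L^2_x$ class of Lipschitz solutions follows immediately from the stability estimate by setting $v(0) = 0$. I expect no substantive obstacle beyond this bookkeeping; the key structural input is the symmetry of $\cA^j$, without which the principal term could not be absorbed into the $B$-controlled coefficient.
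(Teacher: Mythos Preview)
Your argument is correct and is the classical, elementary route: subtract the two equations, view the result as a symmetric hyperbolic linear equation for $v=u_1-u_2$ with a zeroth-order source, integrate by parts using the symmetry of $\cA^j$, and close with Gronwall. (One small notational slip: since $v$ is $\R^m$-valued and $\cA^j$ is a matrix, the identity $2v\cdot\cA^j\partial_j v=\cA^j\partial_j(v\cdot v)$ is not literally correct; what you want is $2v\cdot\cA^j\partial_j v=\partial_j(v\cdot\cA^j v)-v\cdot(\partial_j\cA^j)v$, which uses symmetry and gives the same $C(A)B\|v\|_{L^2}^2$ bound.) The paper deliberately takes a different path: it casts the difference equation in the paradifferential form $v_t=T_{DN(u_1)}v+T_{DN(u_1)-DN(u_2)}u_2+F(u_1)-F(u_2)$ and then invokes the already-established $L^2$ well-posedness of the paradifferential flow together with separate Coifman--Meyer type estimates for the two source terms. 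The paper even remarks explicitly that your direct integration-by-parts proof works for this model problem; its reason for the paradifferential detour is pedagogical---to exhibit the difference bound and the full energy estimate as two instances of the same underlying mechanism (control of the linear paradifferential flow plus perturbative source terms), and to give a template that survives in problems where a naive subtraction loses structure. Your approach is shorter and self-contained here; the paper's approach is more modular and transfers more readily to settings where the nonlinearity is not as cleanly factorable.
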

This is exactly the kind of weak Lipschitz bound discussed earlier. With a bit of additional effort, for the $H^s$ solutions in Theorem~\ref{t:lwp} this may be extended to a larger range of Sobolev spaces,  
\begin{equation}
\| (u_1- u_2)\|_{L^\infty( [0,T];H^\sigma )} \lesssim  \| (u_1- u_2)(0)\|_{H^\sigma}, \qquad |\sigma| \leq s-1.
\end{equation}
The small price to pay here is that now the implicit constant 
in the estimate depends not only on $A$ and $B$ but also on the norms of $u_1$ and $u_2$ in $C([0,T];H^s)$.

\bigskip

A key role in the proof of the well-posedness result is played by the energy estimates,
which are also of independent interest:

\begin{theorem}\label{t:ee}
The following bounds hold for for solutions to \eqref{sym-hyp} for all $s \geq  0$:
\begin{equation}
\| u(t)\|_{H^s} \lesssim e^{C(A) \int_0^t B(s)\, ds}    \| u(0)\|_{H^s}.
\end{equation}
\end{theorem}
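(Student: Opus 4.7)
The plan is to establish the $L^2$ ($s=0$) case directly, then bootstrap to general $s$ by applying $\langle D \rangle^s$ to the equation and reducing to a modified $L^2$ estimate with commutator error.

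For the $L^2$ case, I would pair the equation with $u$ and integrate, obtaining
\begin{equation*}
\frac{1}{2}\frac{d}{dt}\|u\|_{L^2}^2 = \int u \cdot \cA^j(u)\partial_j u \, dx.
\end{equation*}
Using the symmetry of the matrices $\cA^j(u)$, one integration by parts converts the right-hand side into $-\frac{1}{2}\int u\cdot (\partial_j \cA^j(u)) u\, dx$, which is bounded by $\|\partial_j \cA^j(u)\|_{L^\infty} \|u\|_{L^2}^2$. The chain rule and smoothness of $\cA^j$ give $\|\partial_j \cA^j(u)\|_{L^\infty} \leq C(A) B$, which is exactly the desired differential inequality; Gronwall then closes this case.

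For general $s \geq 0$, I would apply $\langle D \rangle^s$ to the equation and set $v = \langle D \rangle^s u$, so that
\begin{equation*}
\partial_t v = \cA^j(u)\partial_j v + [\langle D \rangle^s, \cA^j(u)]\partial_j u.
\end{equation*}
Running the $L^2$ energy argument on $v$, the main term is handled exactly as above, modulo the symmetry of $\cA^j(u)$ still being available. The commutator term is the price to pay: a Kato--Ponce / Coifman--Meyer type estimate gives
\begin{equation*}
\| [\langle D \rangle^s, \cA^j(u)]\partial_j u\|_{L^2} \lesssim \|\nabla \cA^j(u)\|_{L^\infty}\|u\|_{H^s} + \|\cA^j(u)\|_{H^s}\|\nabla u\|_{L^\infty}.
\end{equation*}
The first factor in each summand is $\lesssim C(A) B$ directly or after a chain rule, while a Moser estimate yields $\|\cA^j(u)\|_{H^s}\lesssim C(A)\|u\|_{H^s}$ once one subtracts off $\cA^j(0)$. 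Cauchy--Schwarz then produces $\frac{d}{dt}\|u\|_{H^s}^2 \lesssim C(A) B\|u\|_{H^s}^2$, and Gronwall completes the proof.

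The main obstacle is the commutator estimate, which is the technical heart of the argument. For integer $s$ one can prove it by expanding $\langle D \rangle^s$ in terms of derivatives and using Leibniz plus standard Moser, but for non-integer $s$ one genuinely needs the fractional commutator/Leibniz machinery, and care must be taken that the resulting bound has the correct structure: one factor must produce $B$ (so that $B$ alone, not higher Sobolev norms of $u$, drives the energy growth), while the other factor absorbs the $H^s$ norm. A secondary subtlety is that this estimate is formal unless $u$ is already known to lie in $H^s$; in a full treatment one would first run the argument for smooth solutions and then pass to the limit, but as an a priori estimate for already-regular solutions the scheme above suffices.
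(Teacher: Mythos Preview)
Your proof is correct but follows a different route from the paper. You use the classical Kato--Ponce commutator approach: conjugate the full equation by $\langle D\rangle^s$, integrate by parts against the symmetric coefficient $\cA^j(u)$, and control the commutator $[\langle D\rangle^s,\cA^j(u)]\partial_j u$ by Kato--Ponce plus Moser. The paper instead writes the equation in paradifferential form $u_t = T_{DN(u)}u + F(u)$ and proves the energy estimate modularly: Proposition~\ref{p:F} shows the ``balanced'' piece $F(u)$ satisfies $\|F(u)\|_{H^\sigma}\lesssim_A B\|u\|_{H^\sigma}$ via Coifman--Meyer and Moser, while Proposition~\ref{p:para} shows the linear paradifferential flow $w_t=T_{\cA^j(u)}\partial_j w + T_{D\cA^j(u)\partial_j u}w$ satisfies $\frac{d}{dt}\|w\|_{H^\sigma}^2\lesssim_A B\|w\|_{H^\sigma}^2$ in every $H^\sigma$, using only the elementary paraproduct commutator bound \eqref{adj-diff+} rather than full Kato--Ponce. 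Your approach is more self-contained and is the standard textbook argument; the paper's decomposition buys a unified treatment in which the same paradifferential flow underlies the estimates for the full equation, the linearized equation, and the difference equation, and the commutator step is technically lighter because the coefficient is already frequency-localized. The paper itself acknowledges (see the remark following Proposition~\ref{p:diff}) that for this particular model a direct integration-by-parts argument like yours suffices, and that the paradifferential route is chosen to illustrate a template for harder problems.
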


Finally, as a corollary of the last result, we obtain a continuation criteria 
for solutions:

\begin{theorem}\label{t:lifespan}
Solutions can be continued  in $H^s$ for as long as $\int B$ 
remains finite.
\end{theorem}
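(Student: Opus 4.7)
The plan is a standard continuation-by-contradiction argument. Assume the maximal lifespan $T^{\ast} = T(u_0)$ is finite and yet $\int_0^{T^{\ast}} B(\tau)\,d\tau < \infty$. The goal is to extend $u$ strictly past $T^{\ast}$, contradicting maximality.

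\emph{Step 1: control the $L^{\infty}$ parameter.} The energy estimate of Theorem~\ref{t:ee} controls $H^{s}$ growth only modulo the $L^{\infty}$ parameter $A(t)=\|u(t)\|_{L^{\infty}}$, so the first job is to recover $A$ from $\int B$. From \eqref{sym-hyp} and the smoothness of the coefficients $\mathcal{A}^{j}$ one has the pointwise bound $|\partial_{t} u(t,x)| \leq C(|u(t,x)|)\,|\nabla u(t,x)|$. Integrating in $t$ at fixed $x$ and taking the supremum in $x$ yields the scalar integral inequality
\[
A(t) \leq A(0) + \int_{0}^{t} C(A(\tau))\,B(\tau)\,d\tau,
\]
and a Gronwall/Osgood argument then forces $A(t) \leq A^{\ast} < \infty$ on $[0,T^{\ast})$, using only finiteness of $\int_0^{T^{\ast}} B$ together with local boundedness of $C$.

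\emph{Step 2: bootstrap to $H^{s}$.} Plugging the bound $A(t) \leq A^{\ast}$ into Theorem~\ref{t:ee} gives
\[
\sup_{0\leq t < T^{\ast}} \|u(t)\|_{H^{s}} \leq e^{C(A^{\ast})\int_{0}^{T^{\ast}} B(\tau)\,d\tau}\,\|u_{0}\|_{H^{s}} =: M < \infty.
\]

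\emph{Step 3: restart and patch.} Theorem~\ref{t:lwp} produces, for initial data of $H^{s}$ size at most $M$, a solution on a time interval of length $\tau = \tau(M) > 0$. Pick $t_{0} \in (0,T^{\ast})$ with $T^{\ast}-t_{0} < \tau/2$ and solve \eqref{sym-hyp} with data $u(t_{0})$; the new solution $\tilde{u}$ lives on $[t_{0},t_{0}+\tau]$, an interval strictly containing $T^{\ast}$. By Theorem~\ref{t:unique}, $\tilde{u} \equiv u$ on $[t_{0},T^{\ast})$, so concatenation extends $u$ to an $H^{s}$ solution on $[0,t_{0}+\tau] \supsetneq [0,T^{\ast}]$, contradicting the definition of $T^{\ast}$.

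The main obstacle is Step~1: both the energy inequality and the uniform local existence time depend on the $L^{\infty}$ parameter $A$ in addition to $B$, yet the hypothesis of the theorem mentions only $B$. This loop must be closed using the equation itself, exploiting the quasilinear structure to bound $A$ from $B$. Once that is in hand, the rest is a clean bootstrap followed by a standard local-restart argument, with uniqueness matching the new piece to the old.
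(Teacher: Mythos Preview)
The paper does not give a proof of this theorem; it is simply declared a corollary of the energy estimate in Theorem~\ref{t:ee}. Your Steps~2 and~3 are exactly the standard continuation-by-restart argument one would use to cash in that corollary, and they are correct.

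The issue is Step~1. You derive the pointwise integral inequality $A(t)\le A(0)+\int_0^t C(A(\tau))B(\tau)\,d\tau$ correctly, but the claim that a ``Gronwall/Osgood argument'' then bounds $A$ on $[0,T^\ast)$ from finiteness of $\int_0^{T^\ast} B$ together with \emph{local boundedness} of $C$ is not justified. Gronwall needs $C$ affine; Osgood needs the non-integrability condition $\int^\infty dr/C(r)=\infty$. For general smooth $\mathcal A^j$ the modulus $C(r)=\sup_{|v|\le r}|\mathcal A(v)|$ can grow superlinearly (take $\mathcal A^j$ quadratic in $u$, so $C(r)\sim r^2$), and then the comparison ODE $\psi'=C(\psi)B$, $\psi(0)=A(0)$, blows up once $\int B$ exceeds a finite threshold depending only on $A(0)$. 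Knowing a priori that the actual $A$ is finite on $[0,T^\ast)$ does not salvage the argument: it only tells you $A$ happens to stay below the blown-up barrier, not that it is uniformly bounded as $t\to T^\ast$. So Step~1 as written has a gap; closing it requires either an additional growth hypothesis on $\mathcal A^j$ so that Osgood genuinely applies, or some separate mechanism to control $\|u\|_{L^\infty}$. The paper itself does not spell this point out.
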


Theorem~\ref{t:lwp} has been first proved by Kato~\cite{Kato}, borrowing ideas from nonlinear semigroup theory, see e.g. Barbu's book \cite{Barbu}. The existence and uniqueness part, as well
as the energy estimates, can also be found in standard references, e.g. in the books of 
Taylor~\cite{Taylor}, H\"{o}rmander~\cite{Hormander} and Sogge~\cite{Sogge} (in the last two
the wave equation is considered, but the idea is similar). However, interestingly enough, the 
continuous dependence part is missing in all these references. We did find presentations of continuous dependence arguments inspired from Kato's work in Chemin's book \cite{Chemin}, and also on Tao's blog \cite{Tao}.

Our objective for remainder of the paper will be to provide complete proofs for the four theorems above, which the reader may take as a guide 
for his problem of choice. While these results are not new in the model case we consider, to the best of our knowledge this is the first time when the proofs of these results are presented in this manner.
Along the way, we will also provide extensive comments and pointers
to alternative methods developed along the years.

In particular, we would emphasize the frequency envelope approach 
for the regularization and continuous dependence parts, as well as the time discretization approach for the existence proof.
The frequency envelope approach has been repeatedly used by the authors, jointly with  different collaborators, in a number of papers, see e.g. \cite{st-nlw}, \cite{tat-wm},\cite{mmt}, \cite{HIT}, \cite{no-rel}, with some of the ideas crystalizing along the way. The version of the existence proof based on a time discretization is in some sense very classical, going back to ideas which have originally appeared in the context of semigroup theory; however, its implementation is inspired
from the authors' recent work \cite{no-rel}, though the situation considered here is considerably simpler.

\subsection{An outline of these notes}
Our strategy will be, in each section, to provide some ideas and a broader discussion in the context of the general equation \eqref{gen-eq}, and then show how this works in detail in the context  of our chosen example \eqref{sym-hyp}.

In the next section we introduce the paradifferential form of our equations,
both the main equation and its linearization. This is an idea that goes back to work of 
Bony~\cite{Bony}, and helps clarify the roles played by different frequency interaction modes in the equation. Another very useful reference here is Metivier's more recent book \cite{Metivier}.

Section~\ref{s:energy} is devoted to the energy estimates, in multiple contexts. These are presented both for the full equation, for its linearization, for its associated linear paradifferential flow,
and for differences of solutions. The latter, in turn, yields 
the uniqueness part of the well-posedness theorem. 
 A common misconception here has been that for well-posedness
 it suffices to prove energy estimates for the full equation.
 Instead, in our presentation we regard 
the bound for the linearized problem as fundamental, though, at the 
implementation level, it is the paradifferential flow bound which
can be found at the core.

Section~\ref{s:existence} provides two approaches for the existence part of the well-posedness theorem. The first one, more classical, is  based on an iteration scheme, which works well on our model problem but may run into implementation issues in more complex problems.  The second approach, which we regard as more robust, relies on a time discretization, and 
is somewhat related to nonlinear semigroup theory, which also inspired Kato's work. Two other possible strategies, which have played a role historically, are briefly outlined. 

Section~\ref{s:rough} introduces Tao's notion of frequency envelopes (see for example \cite{tao2000global}), which 
is very well suited to track the flow of energy as time progresses. This is 
then used to show how rough solutions can be obtained as uniform limits of smooth solutions. This is a key step in many well-posedness arguments, and helps decouple the regularity for the initial existence result from the rough data results. 

Finally the last section of the paper is devoted to the continuous dependence result, where we provide the modern, frequency envelopes based approach. At the same time, for a clean, elegant reinterpretation of 
Kato's original strategy we refer the reader to Tao's blog \cite{Tao}.

\subsection{Acknowledgements}
The first author was supported by a Luce Professorship, by the Sloan Foundation, and by an NSF CAREER grant DMS-1845037. The second author was supported by the NSF grant DMS-1800294 as well as by a Simons Investigator grant from the Simons Foundation. Both authors are extremely grateful to MSRI for their full support in holding the graduate  summer school ``Introduction to water waves''  in a virtual format due to the less than ideal circumstances.

\section{A menagerie of related equations}

While ultimately one would want all the results stated in terms of the full nonlinear equation, any successful approach to quasilinear problems 
needs to also consider a succession of closely related linear equations,
as well as associated reformulations of the nonlinear flow. Here we aim to motivate and describe these related flows, stripping away technicalities.

\subsection{The linearized equation} 

This plays a key role in comparing different solutions;
we will write it in the form
\begin{equation}\label{gen-eq-lin}
v_t = DN(u)v, \qquad v(0) = v_0 ,   
\end{equation}
where $DN$ stands for the differential of $N$, which in our setting
is a partial differential operator of order $k$. One may also
reinterpret the equation for the difference of two solutions as a perturbed linearized equation with a quadratic source term;
some caution is required here, because often some structure is lost in doing this, and the question is whether that is not too much.

In the particular case of \eqref{sym-hyp}, the linearized 
equation takes the form
\begin{equation}\label{sym-hyp-lin}
\partial_t  v = \cA^j(u) \partial_j v + D\cA^j(u) v \, 
\partial_j u, \qquad v(0) = v_0   . 
\end{equation}

\subsection{The linear paradifferential equation}

One distinguishing feature of quasilinear evolutions is that 
the nonlinearity cannot be interpreted as perturbative. Nevertheless,
one may seek to separate parts of the nonlinearity which can be seen as 
perturbative, at least at high regularity, in order to better 
isolate and understand the nonperturbative part.

To narrow things down, consider a nonlinear term which is quadratic,
say of the form $\partial^\alpha u_1 \partial^\beta u_2$, 
and consider the three modes of interaction between these terms,
according to the Littlewood-Paley trichotomy,
or paraproduct decomposition,
\[
\partial^\alpha u_1 \partial^\beta u_2 =
T_{\partial^\alpha u_1} \partial^\beta u_2 + T_{\partial^\beta u_2} \partial^\alpha u_1
+ \Pi(\partial^\alpha u_1, \partial^\beta u_2),
\]
where the three terms represent the $low$-$high$, $high$-$low$ respectively the $high$-$high$ frequency interactions.
The high-high interactions in the last term are always perturbative at high regularity,
so are placed into the perturbative box. But one cannot do the same with the low-high or high-low interactions, which are kept on the nonperturbative side. This is 
closely related to the linearization, and indeed, at the end of the day,
we are left with a paradifferential style nonperturbative part of our evolution,
which we can formally write as
\begin{equation}\label{gen-eq-para}
w_t = T_{DN(u)} w, \qquad w(0) = w_0  .  
\end{equation}
Here, one can naively use Bony's notion of paraproduct \cite{Bony} to define the linear 
operator $T_{DN(u)}$ as 
\[
T_{DN(u)} w = \sum_{|\alpha|\leq k } T_{\partial_{p^\alpha}N(u) } \partial^\alpha w,
\]
where $p^\alpha$ is a placeholder for the $\partial^\alpha u$ argument of the nonlinearity $N$.
However there are also other related choices one can make, see for instance the discussion at the end of this subsection. For a discussion on the use of paradifferential calculus in nonlinear PDE's (though not the above notation) 
we refer the reader to Metivier's book \cite{Metivier}.

One can think of the above evolution as a linear evolution of high frequency waves on a low frequency background. Then one can interpret solving 
the nonperturbative part of our evolution as an infinite dimensional 
triangular system, where each dyadic frequency of the solution is 
obtained at some step by solving a linear system with coefficients depending only on the lower components, and in turn it affects the coefficients of the equations for the higher frequency components.
Of course, this should only be understood in a philosophical sense, 
because a variable coefficient flow in general does not preserve frequency localizations. This can sometimes be achieved with careful choices of the paraproduct quantizations, but it never seems worthwhile
to  implement, as the perturbative terms will mix frequencies anyway and add tails.
\medskip

Turning to our model problem, in a direct interpretation the associated paradifferential equation will have the form 
\begin{equation}\label{sym-hyp-para}
\partial_t  w = T_{\cA^j(u)} \partial_j w + T_{D\cA^j(u) \partial_j u}  w \,
, \qquad w(0) = w_0   . 
\end{equation}
However, upon closer examination one may see several choices that could be made. Considering for instance the first paraproduct, which of the following expressions would make the better choice at frequency $2^k$ ?
\[
\cA^j(u)_{< k-8} \partial_j w_k, 
\qquad \cA^j(u_{< k-8}) \partial_j w_k, 
\qquad [\cA^j(u_{< k-8})]_{<k-4} \partial_j w_k. 
\]
The last one may seem the most complicated, but it is also the most accurate. In many cases, including our model problem, it makes no difference in practice. However, one should be aware 
that often a simpler choice, which is made for convenience in 
one problem, might not work anymore in a more complex setting.

\begin{remark}
Here the \emph{frequency gap}, which was set to be equal to $8$ in the above formulas, is chosen rather arbitrarily; 
its role is simply to enforce  the frequency separation between the coefficients and the leading term. On occasion, particularly in large data problems, it is also useful to work instead with a large frequency gap as a proxy for smallness, see e.g. \cite{ST}.
\end{remark}

\subsection{ The paradifferential formulation of the main equations}

Consider first our general equation \eqref{gen-eq}, which we 
can write in the form
\begin{equation}\label{gen-para}
u_t = T_{DN(u)} u + F(u), \qquad u(0) = u_0 .   
\end{equation}
Here one would hope that the paradifferential source term can be seen as perturbative, in the sense that
\[
F: H^s \to H^s, \qquad \text{   Lipschitz}.
\]

Similarly we can write the linearized equation \eqref{gen-eq-lin} in the same format,
\begin{equation}\label{gen-para-lin}
v_t = T_{DN(u)} v + F^{lin}(u) v, \qquad v(0) = v_0 ,   
\end{equation}
with the appropriate nonlinearity $F^{lin}$. This is still based on the paradifferential
equation \eqref{gen-para}, but can no longer be interpreted as the direct paralinearization 
of the linearized equation. This is because the expression $F^{lin}(u) v$ also contains 
some low-high interactions, precisely those where $v$ is the low frequency factor.

\section{Energy estimates}
\label{s:energy}

Energy estimates are a critical part of any well-posedness result, even if they do not tell the entire story. In this section we begin with a heuristic discussion of several ideas in the general case, and then continue with some more concrete analysis in the model case. 

\subsection{The general case}
Consider first the energy estimates for the general problem \eqref{gen-eq},
where it is simpler to think of this in the paradifferential formulation \eqref{gen-eq-para}. An energy estimate for this problem is an estimate that allows us to
control the time evolution of the Sobolev norms of the solution. 
In the simplest formulation, the idea would be to prove that 
\[
\frac{d}{dt} \| u \|^2_{H^\sigma} \lesssim C \| u\|^2_{H^\sigma},
\]
with a constant $C$ that at the very least depends on the $H^s$ norm of $u$.

There are two points that one should take into account when considering such estimates.
The first is that it is often useful to strenghten such bounds by relaxing the dependence of the constant $C$ on $u$. Heuristically, the idea is that this constant measures
the effect of nonlinear interactions, which are strongest when our functions are pointwise large, not only large in an $L^2$ sense. Thus, it is often possible 
to replace the constant $C$ with an analogue of the uniform control norm $B$ in the 
model case, perhaps with some additional implicit dependence on another 
scale invariant uniform control parameter $A$.  See however the discussion in Remark~\ref{r:high-k}.

A second point is that, although it is tempting to try to work directly with the 
$H^s$ norm, it is often the case that the straight $H^s$ norm is not well adapted to the structure of the problem; see e.g. what happens in water waves \cite{abz}, \cite{HIT}.
Then it is useful to construct energy functionals $E^\sigma$ adapted to the problem at hand. 
For these energies we should aim for the following properties

\begin{enumerate}[label=\roman*)]
    \item Energy equivalence:
    \begin{equation}
    E^\sigma(u) \approx \|u\|_{H^\sigma}^2.
    \end{equation}
    
\item Energy propagation
\begin{equation} \label{En-sigma}
   \frac{d}{dt} E^\sigma(u) \lesssim_A  B \| u\|^2_{H^\sigma}, 
\end{equation}
where the control parameter $B$ satisfies
\begin{equation}
    B \lesssim \| u\|_{H^s} .
\end{equation}
\end{enumerate}

Now consider our main equation written in the form \eqref{gen-eq-para}. 
For the perturbative part of the nonlinearity $F$ we hope to have some boundedness,
\begin{equation}\label{F-bound}
\| F(u) \|_{H^\sigma} \lesssim_A B \| u\|_{H^\sigma}   .  
\end{equation}
This in turn allows us to reduce 
nonlinear energy bounds of the form \eqref{En-sigma}
to similar bounds for the linear paradifferential equation \eqref{gen-para}.
One may legitimately worry here 
that some structure is lost when we decouple the 
paradifferential coefficients from the evolution variable; however, the point is that these two objects are indeed separate, as they represent different frequencies of the solution.

\begin{remark}
In our discussion here we took the simplified view that bounds for $F$ begin at $\sigma = 0$. But this is not always the case in practice, and often one needs to identify 
the lower range for $\sigma$ where this works; see e.g. the 
nonlinear wave equation \cite{st-nlw}, the
wave map equation \cite{tat-wm}, or the water wave problem considered in \cite{ait}.
\end{remark}

Now consider the paradifferential evolution \eqref{gen-para}, and begin with the $L^2$ case by setting 
$\sigma = 0$.  Then we need to produce a linearized type energy $E^{0,lin}_u$
so that the solutions  satisfy 
\begin{equation}\label{En-0-para}
   \frac{d}{dt} E^{0,lin}_u(w) \lesssim_A  B \| w\|^2_{L^2}   .
\end{equation}
Then the associated nonlinear energy at $\sigma=0$ would be 
\[
E^0(u) = E^{0,lin}_u (u).
\]
If $E_u^{0,lin}(w)= \|w\|_{L^2}^2$, then the bound \eqref{En-0-para} would simply require that the paradifferential operator $T_{DN(u)}$ is  essentially antisymmetric in $L^2$.
If that is not true, then the backup plan is to find an equivalent Hilbert norm on $L^2$
so that the antisymmetry holds. Some care is however needed; if this norm depends on $u$, then this dependence needs to be mild.

The next step is to consider a larger $\sigma$. By interpolation it suffices to work with integer $\sigma$, in which case one might simply differentiate \eqref{gen-eq-para},
\[
(\partial^\sigma  w)_t = T_{DN(u)} (\partial^\sigma w) + [\partial^\sigma, T_{DN(u)}] w.
\]
Here we would be done if the last commutator is bounded from $H^\sigma$ into $L^2$.
In principle that would be the case almost automatically at least when the order $k$ of $N$ is at most one. One can heuristically associate this with the finite speed of propagation in the high frequency limit. 

\begin{remark}
\label{r:high-k}
The case $k > 1$, which corresponds to an infinite speed of propagation, 
is often more delicate; see e.g. \cite{mmt,mmt1,mmt2} for quasilinear Schr\"odinger flows, or \cite{it-cap} for capillary waves. There
one needs to further develop the function space structure based on either 
dispersive properties of solutions, or on normal forms analysis.
\end{remark}

\bigskip 

\subsection{Coifman-Meyer and Moser type estimates}
Before considering our model problem, we briefly review some standard bilinear and nonlinear 
estimates that play a role later on. In the context of bilinear estimates, a standard tool is to consider the Littlewood-Paley paraproduct type decomposition of the product of two functions, which leads to  Coifman-Meyer type estimates, see \cite{MR518170}, \cite{MR3052499}:

\begin{proposition} Using the standard paraproduct notations, one has the following estimates
\begin{equation}\label{CM}
\begin{aligned}
&\Vert T_fg\Vert_{L^2} \lesssim \Vert f\Vert_{L^\infty} \Vert g\Vert_{L^2},\\
&\Vert T_fg\Vert_{L^2} \lesssim \Vert g\Vert_{BMO} \Vert f\Vert_{L^2},\\
&\Vert \Pi (f,g)\Vert_{L^2} \lesssim \Vert f\Vert_{BMO} \Vert g\Vert_{L^2},\\
\end{aligned}
\end{equation}
as well as the  commutator bound
\begin{equation}
    \Vert [P_k, f]g\Vert_{L^2} \lesssim 2^{-k}\Vert \partial_xf\Vert_{L^{\infty}}\Vert g\Vert_{L^2}.
\end{equation}
Here $P_k$ is the Littlewood-Paley projection onto frequencies $\approx 2^k$.
\end{proposition}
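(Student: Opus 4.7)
The plan is to handle each of the four inequalities via Littlewood--Paley decomposition. Throughout I use the paraproduct convention $T_f g = \sum_k (P_{<k-c} f)(P_k g)$ for some fixed frequency gap $c$, and $\Pi(f,g) = \sum_{|k-k'|\le c}(P_k f)(P_{k'}g)$. The two ingredients doing the real work are almost orthogonality of the Littlewood--Paley projections, and the Carleson measure / square-function characterization of $BMO$.

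For the first estimate, the summand $(P_{<k-c} f)(P_k g)$ has Fourier support in an annulus of size $2^k$, so the pieces are almost orthogonal in $L^2$. Pointwise $|P_{<k-c} f|\le C\|f\|_{L^\infty}$, so
\[
\|T_f g\|_{L^2}^2 \lesssim \sum_k \|(P_{<k-c}f)(P_k g)\|_{L^2}^2 \lesssim \|f\|_{L^\infty}^2 \sum_k \|P_k g\|_{L^2}^2 \lesssim \|f\|_{L^\infty}^2 \|g\|_{L^2}^2,
\]
where the last step uses the Littlewood--Paley square function on $L^2$.

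For the second and third estimates, almost orthogonality reduces matters to the bound
\[
\sum_k \int |P_{<k-c} f|^2 |P_k g|^2\, dx \lesssim \|f\|_{L^2}^2 \|g\|_{BMO}^2,
\]
which is Carleson's embedding: the measure $d\mu = \sum_k |P_k g(x)|^2 \, \delta_{t=2^{-k}}\, dx\, dt$ on the upper half space is Carleson with norm $\lesssim \|g\|_{BMO}^2$ (the standard square-function characterization of $BMO$), while $P_{<k-c} f(x)$ is dominated by a nontangential average of $f$ at scale $2^{-k}$. The high-high term $\Pi(f,g)$ is handled analogously after pairing with a test function $h\in L^2$, splitting by Cauchy--Schwarz in $k$, and controlling one factor by the $L^2$ square function of $f$ (or $h$) and the other by the $BMO$ square function of $g$. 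For the commutator bound, I write $P_k f = \check{\phi}_k \ast f$ with $\check{\phi}_k = 2^{nk}\check{\phi}(2^k \cdot)$, where $\phi$ is supported away from the origin, so $\int \check{\phi}_k = 0$. Then
\[
[P_k,f]g(x) = \int \check{\phi}_k(y)\bigl(f(x-y)-f(x)\bigr) g(x-y)\, dy,
\]
so bounding $|f(x-y)-f(x)|\le |y|\,\|\nabla f\|_{L^\infty}$ together with the scaling identity $\int |y|\,|\check{\phi}_k(y)|\,dy \lesssim 2^{-k}$ and Minkowski's inequality give the desired estimate.

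The main technical obstacle is really the $BMO$-based pair (2)--(3): these genuinely need the Carleson embedding theorem and the square-function characterization of $BMO$, neither of which is entirely elementary. Everything else reduces to Plancherel, almost orthogonality, and the one-moment cancellation $\int \check{\phi}_k = 0$. If one wishes to avoid invoking $BMO$ directly, (2) can instead be obtained by duality from a suitable version of (3), combined with the John--Nirenberg inequality, but the cleanest route remains the Carleson measure one sketched above.
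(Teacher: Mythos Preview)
The paper does not actually give a proof of this proposition; it simply states that ``these results are standard in the harmonic/microlocal analysis community'' and points to the Coifman--Meyer and Muscalu--Schlag references. So there is nothing to compare against, and the question is only whether your argument is correct.

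Your proof is essentially the standard one and is correct. The first bound and the commutator bound are handled exactly right (almost orthogonality plus the trivial $L^\infty$ bound for the former; the kernel representation with the first-moment bound $\int |y|\,|\check\phi_k(y)|\,dy \lesssim 2^{-k}$ and Minkowski for the latter). For the two $BMO$ estimates your route via the Carleson embedding theorem and the square-function characterization of $BMO$ is precisely the textbook argument. One small slip of the pen: in your sketch for $\Pi(f,g)$ you say you control ``the other by the $BMO$ square function of $g$'', but in the stated inequality it is $f$ that lies in $BMO$; since $\Pi$ is symmetric this is harmless, but you should align the roles when writing it up. Also, for $\Pi$ the summands $(P_k f)(P_{k'}g)$ are supported in a \emph{ball} of radius $\sim 2^k$, not an annulus, so almost orthogonality is not directly available---you correctly go through duality instead, but it is worth saying explicitly that this is \emph{why} the duality detour is needed.
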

These results are standard in the harmonic/microlocal analysis community.  For nonlinear expressions we use
Moser type estimates instead:
\begin{proposition}
The following Moser estimate holds for a smooth function $F$, with $F(0)=0$, and $s \geq 0$:
\[
\Vert F(u)\Vert_{H^s}\lesssim_{\Vert u\Vert_{L^{\infty}}}\Vert u\Vert_{H^s}.
\]
\end{proposition}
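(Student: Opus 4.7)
The plan is to obtain the estimate via a Littlewood--Paley telescoping identity, reducing matters to the bilinear paraproduct estimates from the preceding proposition. Set $M:=\|u\|_{L^{\infty}}$. The warm-up case $s=0$ is immediate: since $F(0)=0$ and $F$ is smooth, the fundamental theorem of calculus yields the pointwise bound $|F(u(x))|\leq \|F'\|_{L^{\infty}([-M,M])}\,|u(x)|$, whence $\|F(u)\|_{L^2}\lesssim_M \|u\|_{L^2}$.

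For $s>0$ I would use the telescoping identity
\[
F(u)=\sum_{k}\bigl(F(u_{\leq k})-F(u_{\leq k-1})\bigr)=\sum_{k} m_k(u)\,P_k u,
\]
with $u_{\leq k}=P_{\leq k}u$ and $m_k(u)=\int_0^1 F'\bigl(u_{\leq k-1}+\tau P_k u\bigr)\,d\tau$; convergence in $L^2$ is ensured by the $s=0$ bound applied to partial sums. The key observation is that $\|m_k(u)\|_{L^{\infty}}\lesssim_M 1$ uniformly in $k$, by the $L^{\infty}$-boundedness of the Littlewood--Paley projections and the smoothness of $F'$. Using the square-function characterization $\|F(u)\|_{H^s}^2\approx \sum_j 2^{2js}\|P_j F(u)\|_{L^2}^2$, I would split the sum over $k$ into a near-diagonal range $k\geq j-C$, where the $L^{\infty}\cdot L^2$ bound combined with a Schur test in the variable $k-j$ (summable for $s>0$ thanks to the geometric factor $2^{-(k-j)s}$) closes the estimate against $\|u\|_{H^s}$, and a low range $k<j-C$.

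The main obstacle is the low range. There $P_j$ must extract frequencies of size $\sim 2^j$ from the nonlinear factor $m_k(u)$, which is not Fourier-localized even though its arguments live at frequencies $\lesssim 2^k$. I would exploit the Bernstein-style bound $\|\partial^N m_k(u)\|_{L^{\infty}}\lesssim_{M,N} 2^{kN}$, itself a consequence of $\|\partial^N u_{\leq k}\|_{L^{\infty}}\lesssim 2^{kN}M$, to deduce $\|P_j m_k(u)\|_{L^{\infty}}\lesssim_{M,N} 2^{-N(j-k)}$ and thus pay for the frequency mismatch; the commutator bound $\|[P_j,f]g\|_{L^2}\lesssim 2^{-j}\|\partial f\|_{L^{\infty}}\|g\|_{L^2}$ from the previous proposition can also be invoked, iteratively if necessary, to absorb any loss. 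A technically cleaner alternative is to first prove the estimate at integer $s$ by induction, expanding $\partial^{\alpha}F(u)$ via Fa\`a di Bruno and distributing derivatives through Gagliardo--Nirenberg inequalities in the form $\|\partial^{\alpha_i}u\|_{L^{p_i}}\lesssim \|u\|_{L^{\infty}}^{1-\theta_i}\|u\|_{H^s}^{\theta_i}$ with $\sum 1/p_i=1/2$, and then obtain fractional $s\geq 0$ by real interpolation between neighboring integer exponents.
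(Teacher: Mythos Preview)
The paper does not actually prove this proposition; it is simply stated as a standard result, with the remark that such estimates are ``standard in the harmonic/microlocal analysis community.'' So there is no proof in the paper to compare against.

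Your sketch is essentially correct and follows one of the standard routes (the paralinearization/telescoping approach, as in Meyer or Bahouri--Chemin--Danchin). The crucial technical step you isolate for the low range $k<j-C$ --- namely the bound $\|\partial^N m_k(u)\|_{L^\infty}\lesssim_{M,N}2^{kN}$, which yields $\|P_j m_k(u)\|_{L^\infty}\lesssim_{M,N}2^{-N(j-k)}$ --- is exactly what is needed, and together with the near-diagonal bound the Schur/Young convolution argument closes for any $s>0$ once $N>s$. One small caution on your alternative route: real interpolation between integer-$s$ estimates does not apply directly to the \emph{nonlinear} map $u\mapsto F(u)$; one gets around this either by linearizing (e.g.\ freezing $F'(u)$ and interpolating the resulting multiplication operator, which is essentially the paralinearization again) or by simply running the Littlewood--Paley argument for fractional $s$ directly, which is what your first approach already does.
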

Of course many more extensions of both the bilinear and the nonlinear estimates above are available.

\subsection{The model case}
We now turn our attention to our model problem, where, if we adopt the expression \eqref{sym-hyp-para} for the paradifferential flow, the source term $F(u)$ is given by \begin{equation}
F(u) = \cA^j \partial_j u -   T_{\cA^j(u)} \partial_j u - T_{D\cA^j(u) \partial_j u}   u .
\end{equation} 
We can rewrite this in the form
\begin{equation}\label{F(u)}
F(u) =   \Pi(\cA^j(u), \partial_j u) + 
T_{\partial_j u} \cA^j(u) - T_{D\cA^j(u) \partial_j u}   u .
\end{equation}
For this expression we can show that it always plays a perturbative role:

\begin{proposition}\label{p:F}
The above nonlinearity $F$ satisfies the following bounds:

i) Sobolev bounds 
\begin{equation}
\| F(u) \|_{H^\sigma} \lesssim_A B \|u\|_{H^\sigma}, \qquad \sigma \geq 0.    
\end{equation}

ii) Difference bounds
\begin{equation}
\| F(u) - F(v) \|_{H^\sigma} \lesssim_A B \left[\|u-v\|_{H^\sigma}+
 \| u-v\|_{L^\infty} (\| u\|_{H^\sigma} + \|v\|_{H^\sigma}) \right]
, \qquad \sigma \geq 0   , 
\end{equation}
as well as 
\begin{equation}
\| F(u) - F(v) \|_{L^2} \lesssim_A B \|u-v\|_{L^2}.
\end{equation}
\end{proposition}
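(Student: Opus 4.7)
The unifying observation is that in the form \eqref{F(u)} every summand of $F(u)$ is a balanced bilinear expression in $u$ and $\partial u$: a high-high resonance $\Pi(\cA^j(u),\partial_j u)$ and two low-high paraproducts $T_{\partial_j u}\cA^j(u)$ and $T_{D\cA^j(u)\partial_j u}\,u$, in which the derivative of $u$ always sits in the low-frequency factor and is therefore controlled by $B$ in $L^\infty$. This perfect alignment with the Coifman-Meyer inequalities \eqref{CM} is what makes $F$ perturbative.

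For part (i) I would bound the three summands separately. In the first two I first replace $\cA^j(u)$ by $\cA^j(u)-\cA^j(0)$, which is legitimate since constants are annihilated by $\Pi$ and by the high-frequency factor of a paraproduct. Then the $H^\sigma$ Coifman-Meyer bounds $\|\Pi(f,g)\|_{H^\sigma}+\|T_g f\|_{H^\sigma}\lesssim \|g\|_{L^\infty}\|f\|_{H^\sigma}$ with $g=\partial_j u$ and $f=\cA^j(u)-\cA^j(0)$, combined with Moser's estimate $\|\cA^j(u)-\cA^j(0)\|_{H^\sigma}\lesssim_A\|u\|_{H^\sigma}$, take care of them. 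The third summand is estimated directly by $\|T_h\,u\|_{H^\sigma}\lesssim \|h\|_{L^\infty}\|u\|_{H^\sigma}$ with $\|h\|_{L^\infty}=\|D\cA^j(u)\partial_j u\|_{L^\infty}\lesssim_A B$.

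For part (ii), I would expand $F(u)-F(v)$ by applying the bilinear identity $B(u,u)-B(v,v)=B(u-v,u)+B(v,u-v)$ to each bilinear piece, together with its trilinear analogue for the third piece. Every resulting summand then carries either $u-v$, $\partial_j(u-v)$, $\cA^j(u)-\cA^j(v)$, or $D\cA^j(u)\partial_j u - D\cA^j(v)\partial_j v$ in one of its slots. The same Coifman-Meyer placements as in (i), combined now with the Moser difference estimate
\[
\|\cA^j(u)-\cA^j(v)\|_{H^\sigma}\lesssim_A \|u-v\|_{H^\sigma}+\|u-v\|_{L^\infty}(\|u\|_{H^\sigma}+\|v\|_{H^\sigma}),
\]
produce precisely the right-hand side claimed in (ii).

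Part (iii) is where I expect the real difficulty. A careful bookkeeping of the expansion in (ii) reveals a convenient cancellation: the piece $T_{\cA^j(u)-\cA^j(v)}\partial_j u - T_{\cA^j(u)-\cA^j(v)}\partial_j v=T_{\cA^j(u)-\cA^j(v)}\partial_j(u-v)$ is killed by its opposite-signed counterpart coming from the paradifferential reorganization of $\cA^j(v)\partial_j(u-v)-T_{\cA^j(u)}\partial_j(u-v)$. What survives, however, is a trio of summands carrying $\partial_j(u-v)$, namely $T_{\partial_j(u-v)}\cA^j(v)$, $\Pi(\cA^j(v),\partial_j(u-v))$ and $T_{D\cA^j(v)\partial_j(u-v)}v$, for which any direct use of \eqref{CM} produces $\|\partial_j(u-v)\|_{L^2}$, one derivative too many. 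The resolution is Bony's paralinearization $\cA^j(v)=T_{D\cA^j(v)}v+R(v)$, with $R(v)$ one order smoother, together with the paraproduct composition identity $T_aT_b=T_{ab}+\text{smoother}$: the leading parts of $T_{\partial_j(u-v)}\cA^j(v)$ and of $T_{D\cA^j(v)\partial_j(u-v)}v$ then cancel, while an integration by parts inside $\Pi(\cA^j(v),\partial_j(u-v))$ moves the derivative off $u-v$ and onto $\cA^j(v)$, producing $D\cA^j(v)\partial_j v$ which is bounded in $L^\infty$ by $C(A)B$. The remaining residues have the acceptable form $B\|D\cA^j(u)-D\cA^j(v)\|_{L^2}$, which Moser at $\sigma=0$ bounds by $B\cdot C(A)\|u-v\|_{L^2}$. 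Making this paradifferential cancellation precise is the only genuinely delicate step, and is where I would concentrate the technical work.
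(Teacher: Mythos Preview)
Your treatment of parts (i) and (ii) is essentially the paper's: Coifman--Meyer on each piece of \eqref{F(u)}, with Moser to handle $\cA^j(u)$, and the representation $\cA^j(u)-\cA^j(v)=G(u,v)(u-v)$ to track differences. Fine.

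Part (iii) is where you go astray --- not because your paralinearization-plus-cancellation scheme is unsound, but because you have badly overestimated the difficulty. The three terms you correctly isolate as problematic, namely
\[
\Pi(\cA^j(v),\partial_j(u-v)),\qquad T_{\partial_j(u-v)}\cA^j(u),\qquad T_{D\cA^j(v)\partial_j(u-v)}\,v,
\]
are each handled \emph{directly}, term by term, by the same mechanism you invoke only for the first: transfer of the derivative across the paraproduct. The point is that in all three expressions the factor $\partial_j(u-v)$ sits in the low-frequency (or equal-frequency) slot, so the derivative can be shifted onto the high-frequency factor at no cost. Concretely, the paper uses
\[
\|\Pi(a,\partial b)\|_{H^\sigma}\lesssim \|\partial a\|_{L^\infty}\|b\|_{H^\sigma},
\qquad
\|T_{\partial b}\,a\|_{L^2}\lesssim \|b\|_{L^2}\|\partial a\|_{BMO},
\]
the second being a direct corollary of the second Coifman--Meyer inequality in \eqref{CM}. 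Applied to your three terms this gives immediately $\lesssim_A B\|u-v\|_{L^2}$, since the derivative lands on $\cA^j$ or on $u,v$ and produces a factor of $B$. (For the third term one first writes $D\cA^j(v)\partial_j(u-v)=\partial_j[D\cA^j(v)(u-v)]-\partial_jD\cA^j(v)\cdot(u-v)$, as is done in the analogous estimate in Section~\ref{s:energy}.) No Bony paralinearization of $\cA^j(v)$, no paraproduct composition formula, and no cancellation between the $T_{\partial_j(u-v)}\cA^j$ and $T_{D\cA^j\partial_j(u-v)}$ pieces are needed. In the paper this $\sigma=0$ case is dismissed in a single parenthetical remark (``with obvious simplifications if $\sigma=0$''), not treated as the delicate heart of the argument.

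Your route through paralinearization would presumably also close, but it generates new remainder terms (from $\cA^j(v)-T_{D\cA^j(v)}v$ and from $T_aT_b-T_{ab}$) whose $L^2$ control would in the end require the very same derivative-transfer estimates you avoided --- so nothing is gained and the bookkeeping is heavier.
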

The next to last bound shows in particular that $F$ is Lipschitz in $H^s$
for $s > d/2$. The simplification in the case $\sigma = 0$ is also useful in order to bound differences of solutions in the $L^2$ topology.

\begin{proof}
i) We use the expression \eqref{F(u)} for $F$. The first term can be estimated using a version of the Coifman-Meyer estimates and Moser estimates by
\[
\| \Pi(\cA^j(u), \partial_j u)\|_{H^\sigma} \lesssim \| \cA^j(u)\|_{H^\sigma} \| \partial_j u\|_{BMO} \lesssim_A B \|u\|_{H^\sigma}.
\]
For the second term we use again paraproduct bounds and Moser estimates to get
\[
\| T_{\partial_j u} \cA^j(u) \|_{H^\sigma} \lesssim \| \partial_j u\|_{L^\infty}
\|  \cA^j(u) \|_{H^\sigma} \lesssim_A  \| \partial_j u\|_{L^\infty}
\|  u \|_{H^\sigma}.
\]
The third term is similar to the second.
\bigskip

ii) First, we note the representation 
\[
\cA(u) - \cA(v) =: G(u,v)(u-v),
\]
which we use to separate $u-v$ factors. Here $G(u,v)$ is a smooth function of $u$ and $v$. Then taking differences 
in the first term of $F$, we need two estimates
\[
\| \Pi(\cA^j(u), \partial_j (u-v))\|_{H^\sigma} \lesssim \| \partial \cA^j(u)\|_{L^\infty} \|  u-v\|_{H^\sigma} \lesssim_A B \|u-v\|_{H^\sigma}
\]
respectively 
\[
\| \Pi(G(u,v)(u-v), \partial_j v)\|_{H^\sigma} \lesssim 
\| G(u,v)(u-v)\|_{H^\sigma} \| \partial v\|_{L^\infty} \lesssim_A B 
(\|u-v\|_{H^\sigma} + \| u-v\|_{L^\infty}(\|u\|_{H^\sigma} +\|v\|_{H^\sigma})),
\]
noting that for $\sigma=0$ the last term can be avoided.

Similarly we have two estimates corresponding to the second term in $F$, namely
\[
\begin{aligned}
\|T_{\partial_j u} \cA^j(u) - T_{\partial_j v} \cA^j(v)\|_{H^\sigma} &=  \|T_{\partial_j u}[G(u,v) (u-v)] - T_{\left[\partial_j u-\partial_j v\right]} \cA^j(v)\|_{H^\sigma}\\
&\lesssim  \|T_{\partial_j u}[G(u,v) (u-v)]\|_{H^\sigma} + \|T_{\left[\partial_j u-\partial_j v\right]} \cA^j(v)\|_{H^\sigma}, \\
\end{aligned}
\]
where
\[
\|T_{\left[\partial_j u-\partial_j v\right]} \cA^j(v)\|_{H^\sigma} \lesssim \| u-v\|_{L^\infty}
\|  \partial_j \cA^j(v) \|_{H^\sigma} \lesssim_A  B \|u-v\|_{L^\infty} 
\|  v \|_{H^\sigma},
\]
respectively
\[
\|T_{\partial_j u} [G(u,v)(u-v)] \|_{H^\sigma} \lesssim_A  
\| \partial_j u\|_{L^\infty} (\|u-v\|_{H^\sigma} + \| u-v\|_{L^\infty}(\|u\|_{H^\sigma} +\|v\|_{H^\sigma})),
\]
both with obvious simplifications if $\sigma = 0$. Finally,  the bounds for the third term in $F$ are similar to the ones for the second.

\end{proof}

\begin{remark}
For this Proposition~\ref{p:F} one can further relax $B$ to a $BMO$ norm, 
\[
B = \| \nabla u \|_{BMO}.
\]
On the other hand we can also simplify the paradifferential equation \eqref{sym-hyp-para}
to a simpler version
\[
w_t = T_{\cA^j(u)} \partial_j w ,
\]
but in this case  we no longer can relax $B$ to a BMO norm.

\end{remark}

Next we consider the paradifferential equation:

\begin{proposition}\label{p:para}
Assume that $u \in L_{t,x}^\infty$ and $\nabla u \in L^1_t L_x^\infty$ (i.e. $B\in L^1_t$).
Then the paradifferential equation \eqref{sym-hyp-para} is well-posed in all $H^\sigma$ spaces, $\sigma \in \R$, and 
\begin{equation}
\frac{d}{dt} \| w\|_{H^\sigma}^2 \lesssim_A B \|w\|_{H^\sigma}^2    .
\end{equation}
\end{proposition}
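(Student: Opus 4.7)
The plan is to prove the a priori energy estimate as the main ingredient, and to deduce existence, uniqueness, and continuous dependence from it by a standard regularization scheme. The key technical point is a paradifferential symmetrization at the level of the principal operator $T_{\cA^j(u)}\partial_j$; everything else reduces to bookkeeping with Coifman--Meyer and Moser estimates.

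First I would establish the $L^2$ case. Pairing the equation with $w$ gives
\[
\frac{1}{2}\frac{d}{dt}\|w\|_{L^2}^2 = \langle T_{\cA^j(u)}\partial_j w,\, w\rangle + \langle T_{D\cA^j(u)\partial_j u}\, w,\, w\rangle.
\]
The zeroth order piece is controlled directly by the first bound in \eqref{CM} combined with Moser, since $\|D\cA^j(u)\partial_j u\|_{L^\infty}\lesssim_A B$. For the principal term, the symmetry $(\cA^j)^T=\cA^j$ together with the antisymmetry of $\partial_j$ on $L^2$ makes $T_{\cA^j(u)}\partial_j$ antisymmetric up to lower order: using the paraproduct adjoint formula one computes
\[
T_{\cA^j(u)}\partial_j + \bigl(T_{\cA^j(u)}\partial_j\bigr)^* = R(u),\qquad \|R(u)\|_{L^2\to L^2}\lesssim_A B,
\]
because every remainder term carries at least one derivative on the coefficient $\cA^j(u)$, and is therefore controlled by $\|\nabla\cA^j(u)\|_{L^\infty}\lesssim_A B$. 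Gronwall, together with the assumption $B\in L^1_t$, then closes the $L^2$ bound.

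Next I would pass to general $\sigma\in\R$ via a Littlewood--Paley decomposition. The pieces $w_k=P_k w$ satisfy a perturbation of the paradifferential equation with two additional source terms $[P_k, T_{\cA^j(u)}]\partial_j w$ and $[P_k, T_{D\cA^j(u)\partial_j u}]w$. Applying the $L^2$ estimate to each $w_k$, squaring, weighting by $2^{2k\sigma}$, and summing in $k$, the commutator contributions are handled by the standard $[P_k, f]$ commutator bound stated together with \eqref{CM} (combined with Moser), producing a total contribution $\lesssim_A B\|w\|_{H^\sigma}^2$. Alternatively, one can conjugate the equation by $\langle D\rangle^\sigma$ and estimate $[\langle D\rangle^\sigma, T_{\cA^j(u)}]\partial_j$ by paradifferential calculus; either route gives the stated bound for all real $\sigma$.

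For well-posedness, uniqueness is immediate from the $L^2$ energy bound applied to the difference of two solutions with the same data. For existence, I would regularize the equation by replacing the paraproducts with their frequency truncations, e.g.\ $T_{\cA^j(u)}\mapsto T_{P_{<N}\cA^j(u)}$. The truncated equation is an ODE in $L^2$ with a bounded time-dependent generator, hence trivially solvable in $H^\sigma$; the energy estimate goes through with constants uniform in $N$, and a weak compactness argument combined with the already-established uniqueness identifies the limit as $N\to\infty$. Continuous dependence on $w_0$ is an immediate consequence of the linear energy estimate applied to differences. The main obstacle is the symmetrization identity in the $L^2$ step: one must verify that, beyond the cancellation coming from $(\cA^j)^T=\cA^j$, every remaining piece of the self-adjoint part of $T_{\cA^j(u)}\partial_j$ carries at least one derivative on $\cA^j(u)$, so that the error closes in $B$ rather than in a stronger $C^1$-type norm which would fail to be integrable under the sole assumption that $\int B\,dt$ is finite.
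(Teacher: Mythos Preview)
Your energy estimate is essentially the paper's: both symmetrize the principal part $T_{\cA^j(u)}\partial_j$ using $(\cA^j)^T=\cA^j$ and control the self-adjoint defect by $\|\nabla\cA^j(u)\|_{L^\infty}\lesssim_A B$, and your conjugation-by-$\langle D\rangle^\sigma$ alternative for general $\sigma$ is exactly what the paper does (the paper does not go through the dyadic $P_k$ decomposition you also sketch). The genuine difference is in the existence argument. The paper obtains existence by duality: it observes that the adjoint backward problem has the same structure modulo $L^2$-bounded terms, so the same energy estimate holds for it, and then invokes the standard Hahn-Banach equivalence ``energy estimate for the adjoint $\Leftrightarrow$ existence for the direct problem''. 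Your route instead truncates the coefficients to reduce to an ODE in $H^\sigma$, proves uniform-in-$N$ energy bounds, and passes to the limit by weak compactness. Both are correct; the duality argument is shorter and avoids any approximation scheme, while your regularization approach is more constructive and does not require checking that the adjoint system has the same form. One small caveat on your dyadic alternative for $H^\sigma$: the commutator bound stated after \eqref{CM} is for $[P_k,f]$, not for $[P_k,T_f]$, so if you pursue that route you need the (easy) near-diagonality of paraproducts rather than that lemma verbatim; the $\langle D\rangle^\sigma$ conjugation you mention is the cleaner option and is what the paper uses.
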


\begin{proof}
We first consider the energy estimate, where we work with the corresponding 
inhomogeneous equation,
\begin{equation}\label{sym-hyp-para-inhom}
\partial_t  w = T_{\cA^j(u)} \partial_j w + T_{D\cA^j(u) \partial_j u}  w  + f\,
, \qquad w(0) = w_0  .  
\end{equation}
The $L^2$ bound is easiest; we have 
\[
\frac12 \frac{d}{dt}  \|w\|_{L^2}^2 = \int w \cdot T_{\cA^j(u)} \partial_j w +
w\cdot T_{D\cA^j(u) \partial_j u} w + w \cdot f \, dx.
\]
In the second term we simply estimate the para-coefficient in $L^\infty$.
In the first term we commute and integrate by parts,
to arrive at 
\[
\frac12 \int - w \cdot T_{\partial_j \cA^j(u)}  w + w \cdot (T_{\cA^j(u)}- (T_{\cA^j(u)})^*) \partial_j w\,  dx ,
\]
where due to the symmetry of the matrices $\cA^j$ we have the bound
\begin{equation}\label{adj-diff}
\| (T_{\cA^j(u)}- (T_{\cA^j(u)})^*) \partial_j w \|_{L^2} \lesssim_A B\|w\|_{L^2},
\end{equation}
which shows that the corresponding paraproduct operators are self-adjoint at 
leading order. Here we use the $^*$ notation to denote the adjoint of an operator. Hence we obtain
\[
\left|\frac{d}{dt} \| w\|_{L^2}^2 \right| \lesssim_A B \|w\|_{L^2}^2 + \| w\|_{L^2} \|f\|_{L^2},
\]
which further by Gronwall's inequality yields
\begin{equation}
\| w\|_{L_t^\infty([0,T]; L_x^2)} \lesssim_{A}e^{\int _0^TB\,dt} (\|w(0)\|_{L_x^2} + \| f \|_{L_t^1 L^2_x})   . 
\end{equation}

This by itself does not prove well-posedness in $L^2$, it only proves uniqueness. However, a similar bound will hold for the backward adjoint system in the same spaces; this is because the adjoint system coincides with the direct system modulo $L^2$ bounded terms. Together, these two pieces of information yield $L^2$ well-posedness for the paradifferential system in $L^2$.
This is a standard linear duality argument, where the solutions 
are constructed by a direct application of the Hahn-Banach Theorem.
In a nutsell, one has the following equivalencies, see for instance 
\cite{Horm}:
\bigskip

\[
\text{Energy estimates for the direct forward problem} \Longleftrightarrow
\text{Existence for the adjoint backward problem}
\]
\[
\text{Energy estimates for the adjoint backward problem} \Longleftrightarrow
\text{Existence for the direct forward problem}
\]
\bigskip

Exactly the same argument applies in $H^\sigma$, with the small change that now the 
the adjoint system should be considered in $H^{-\sigma}$. There the 
bound \eqref{adj-diff} is replaced by 
\begin{equation}\label{adj-diff+}
\|  (\jD^\sigma T_{\cA^j(u)}- (T_{\cA^j(u)})^*\jD^\sigma) \partial_j w \|_{L^2} \lesssim  \| \nabla \cA(u)\|_{L^\infty} \|w\|_{H^\sigma}\lesssim_A B  \|w\|_{H^\sigma}.
\end{equation}

\end{proof}

Combining the last two propositions, Proposition~\ref{p:F} and Proposition~\ref{p:para}, we obtain the $H^\sigma$ bound in 
Theorem~\ref{t:ee}.

\subsection{The linearized equation}
Next, we turn our attention to the linearized equation, 
which we also write in a paradifferential form
\begin{equation}\label{sym-hyp-lin-para}
\partial_t  v = T_{\cA^j(u)} \partial_j v + T_{D\cA^j(u) \partial_j u} v + F^{lin}(u) v,
 \qquad v(0) = v_0  ,  
\end{equation}
where 
\[
F^{lin}(u) v := \Pi(\cA^j(u), \partial_j v ) +  \Pi(D\cA^j(u) \partial_j u, v)
+ T_{\partial_j v} \cA^j(u) + T_v (D\cA^j(u) \partial_j u) := F^{lin}_{\Pi}(u) v 
+ F^{lin}_{T}(u) v .
\]

We note here that the equation \eqref{sym-hyp-lin-para} is not exactly 
a true paralinearization of the linearized equation, as $F^{lin}_{T}(u) v $
does contain low-high interactions. This difference is observed in the estimates satisfied by the two terms. 

On one hand, the term $F^{lin}_{\Pi}(u) v$ satisfies good bounds in all Sobolev spaces,
\begin{equation}
   \| F^{lin}_\Pi(u) v \|_{H^\sigma} \lesssim_A B \|v\|_{H^\sigma}, \qquad \sigma \geq 0 ,
\end{equation}
so it can be seen as a true perturbative term. This a simple, Coifman-Meyer type estimate which is left for the reader.

On the other hand, assuming we know that $u \in H^s$, the term $F^{lin}_{T}(u) v$
can at best be estimated in $H^{s-1}$, and there of course we could not use 
the control norms, instead we would have to use the full $H^s$ norm of $u$.
However, we can use the control norms for  $L^2$ bounds to directly estimate
\begin{equation}
   \| F^{lin}_T(u) v \|_{L^2} \lesssim_A B \|v\|_{L^2}. 
\end{equation}

Combining the last two estimates with Proposition~\ref{p:para} we perturbatively obtain 

\begin{proposition}\label{p:lin-wp}
Assume that $A \in L^\infty$ and that $B \in L^1$. Then the  linearized equation \eqref{sym-hyp-lin} is well-posed in $L^2$, with bounds
\begin{equation}
\frac{d}{dt} \| v\|_{L^2}^2 \lesssim_A B \|v\|_{L^2}^2    .
\end{equation}
\end{proposition}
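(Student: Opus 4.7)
The plan is to view the linearized equation \eqref{sym-hyp-lin-para} as the inhomogeneous paradifferential equation \eqref{sym-hyp-para-inhom} with source term $f = F^{lin}(u)v$, and then invoke Proposition~\ref{p:para} together with the $L^2$ bounds already recorded for the two pieces $F^{lin}_\Pi(u)v$ and $F^{lin}_T(u)v$. The key observation is that, while $F^{lin}_T(u)v$ contains low-high interactions that prevent it from being perturbative at high Sobolev regularity, at the $L^2$ level both components satisfy
\[
\| F^{lin}(u) v\|_{L^2} \lesssim_A B \| v\|_{L^2},
\]
as stated immediately before the proposition. So in the $L^2$ setting the full nonlinearity behaves perturbatively.

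First, I would invoke the inhomogeneous $L^2$ energy estimate established inside the proof of Proposition~\ref{p:para}, namely
\[
\Bigl|\frac{d}{dt} \| v\|_{L^2}^2 \Bigr| \lesssim_A B \| v\|_{L^2}^2 + \| v\|_{L^2} \| f\|_{L^2},
\]
and substitute $f = F^{lin}(u)v$. Using the $L^2$ bound for $F^{lin}(u)v$ recalled above, both terms on the right combine into $\lesssim_A B \|v\|_{L^2}^2$, which is the desired energy inequality. Gronwall's inequality then produces the integrated form
\[
\| v\|_{L^\infty_t([0,T];L_x^2)} \lesssim_A e^{\int_0^T B\, dt}\, \| v(0)\|_{L^2}.
\]

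For existence and uniqueness in $L^2$, I would repeat verbatim the Hahn-Banach/duality scheme used at the end of the proof of Proposition~\ref{p:para}. Uniqueness is immediate from the forward energy estimate just derived. For existence, one writes down the backward adjoint system for \eqref{sym-hyp-lin-para}; since the adjoint of $F^{lin}(u)$ is also a zeroth-order operator on $L^2$ with the same symbolic bounds (all the Coifman-Meyer and paraproduct estimates are symmetric under transposition modulo acceptable error terms controlled by $A$ and $B$), the adjoint problem satisfies an identical $L^2$ energy estimate. The standard equivalence recorded in the excerpt then converts the adjoint energy bound into existence for the forward problem.

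The main technical point to watch is verifying that $F^{lin}_T(u)v$ really maps $L^2$ to $L^2$ with the claimed constant, and that its adjoint enjoys the analogous bound; this is where the restriction to $\sigma=0$ is essential, since at higher Sobolev regularity the low-high paraproducts in $F^{lin}_T$ would force us to pay the full $H^s$ norm of $u$ rather than the control parameter $B$. Once that $L^2$ estimate is granted, the proposition follows by purely perturbative arguments on top of Proposition~\ref{p:para}, with no further commutator or symmetrization work required.
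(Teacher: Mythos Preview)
Your proposal is correct and follows essentially the same approach as the paper: the paper's proof is the single sentence ``Combining the last two estimates with Proposition~\ref{p:para} we perturbatively obtain'' the result, and you have simply unpacked what that perturbative combination means (inhomogeneous energy estimate, Gronwall, duality for existence). The paper even remarks afterwards that a direct integration by parts would suffice without any paradifferential machinery, but your write-up matches the paradifferential route actually taken.
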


We observe the obvious fact that one does not need paradifferential calculus in order 
to prove this proposition; a simple integration by parts suffices. 
However, it is instructive to dissect the terms in the equation and understand their 
respective roles. Also, it is interesting to observe that in appropriate 
settings, the linearized equation can be thought of as a perturbation of 
the associated paradifferential equation.

\begin{remark}\label{r:sigma-lin}
Well-posedness and bounds for the linearized equation can be also obtained in all $H^\sigma$ spaces for $|\sigma| \leq s-1$. However, this can no longer be done 
in terms of our control parameters; for instance if $\sigma = s-1$ then we need to use the full $H^s$ norm of the solutions. While interesting, this observation will not be needed for the rest of the paper.
\end{remark}

\subsection{Difference bounds and uniqueness}

The easiest way to compare two solutions $u_1$ and $u_2$ for \eqref{gen-eq} is to subtract their respective equations, to obtain an equation for $v= u_1-u_2$. In the general case, using the form \eqref{gen-para} of the equation, we obtain
\[
v_t = T_{DN(u_1)} v + T_{DN(u_1)-DN(u_2)} u_2 + F(u_1)-F(u_2).
\]
Here we identify this equation as the paradifferential equation associated to  $u_1$, but with two source terms, which  we would like to interpret as perturbative in a low regularity Sobolev space,
say $L^2$. That would yield a bound of the form 
\begin{equation}\label{diff-est}
\| v(t)\|_{L^2} \lesssim e^{C(A)\int_0^t B(s) ds} \|v(0)\|_{L^2},
\end{equation}
where $A= A_1+A_2$, $B = B_1+B_2$, with $A_i=\Vert u_i\Vert_{L^\infty}$, and $B_i=\Vert \nabla u_i\Vert_{L^{\infty}}$, for $i=\overline{1,2}$.

\medskip

Let us see how this works out in our model problem. We will show that
\begin{proposition}\label{p:diff}
Let $u_1$ and $u_2$ be two Lipschitz solutions to \eqref{sym-hyp}
with associated control parameters $A_1,B_1$ respectively $A_2,B_2$.
Then their difference $v = u_1-u_2$ satisfies the bound \eqref{diff-est}.

\end{proposition}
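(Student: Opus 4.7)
The plan is to work directly with \eqref{sym-hyp} rather than going through the paradifferential reformulation, since the proposition only assumes $u_1,u_2$ are Lipschitz. Using the fundamental theorem of calculus along the segment from $u_2$ to $u_1$, first write
\[
\cA^j(u_1) - \cA^j(u_2) = G(u_1,u_2)\, v, \qquad G(u_1,u_2) := \int_0^1 D\cA^j(u_2 + \tau v)\, d\tau,
\]
so that $G$ is smooth and pointwise bounded by a function of $A$. Subtracting the two equations then gives
\[
\partial_t v = \cA^j(u_1)\, \partial_j v + G(u_1,u_2)\, v\, \partial_j u_2, \qquad v(0) = u_1(0) - u_2(0).
\]

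Next I would compute $\frac{d}{dt}\|v\|_{L^2}^2 = 2\langle v,\partial_t v\rangle$ and bound the two contributions separately. For the principal transport piece, symmetry of $\cA^j(u_1)$ and integration by parts give
\[
2 \langle v, \cA^j(u_1)\, \partial_j v \rangle = -\langle v,\partial_j \cA^j(u_1)\, v\rangle,
\]
which, using the identity $\partial_j \cA^j(u_1) = D\cA^j(u_1)\, \partial_j u_1$, is bounded by $\|\partial \cA^j(u_1)\|_{L^\infty} \|v\|_{L^2}^2 \lesssim_{A_1} B_1 \|v\|_{L^2}^2$. For the source term, a direct application of Cauchy--Schwarz yields
\[
|\langle v, G(u_1,u_2)\, v\, \partial_j u_2\rangle| \leq \|G\|_{L^\infty}\|\partial_j u_2\|_{L^\infty} \|v\|_{L^2}^2 \lesssim_A B_2 \|v\|_{L^2}^2.
\]
Combining the two bounds gives $\frac{d}{dt}\|v\|_{L^2}^2 \lesssim_A B \|v\|_{L^2}^2$, and Gronwall's inequality then produces \eqref{diff-est}.

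I expect no substantial obstacle in this symmetric hyperbolic setting: the classical symmetrizer argument handles the first-order part cleanly, and the inhomogeneous source is pointwise linear in $v$. For contrast, the paradifferential route hinted at in the general discussion preceding the proposition would instead write $v_t = T_{\cA^j(u_1)}\partial_j v + T_{D\cA^j(u_1)\partial_j u_1}v + R$ with
\[
R = T_{Gv}\,\partial_j u_2 + T_{\partial_j(Gv)}\, u_2 + (F(u_1) - F(u_2)),
\]
and apply Proposition~\ref{p:para} together with the $\sigma = 0$ case of Proposition~\ref{p:F} to absorb $R$ perturbatively. There the first and third remainders are routine (Coifman--Meyer in BMO, respectively the stated difference bound for $F$); the truly non-routine step is the middle piece $T_{\partial_j(Gv)} u_2$, for which one uses the commutator identity $T_{\partial_j f}g = \partial_j T_f g - T_f\partial_j g$ and integrates by parts against $v$ inside the energy inner product. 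Avoiding this maneuver is precisely the convenience of the direct computation in this model case.
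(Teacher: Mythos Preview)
Your direct argument is correct: subtract the equations, write the coefficient difference as $G(u_1,u_2)v$, use the symmetry of $\cA^j(u_1)$ to integrate the principal term by parts, and bound the zero-order source pointwise. This is precisely the elementary route the paper alludes to in the remark immediately following its own proof (``the difference bound can be directly proved using a simple integration by parts, without any need for paradifferential calculus''). The paper deliberately takes the longer paradifferential path instead: it writes $v_t = T_{DN(u_1)} v + T_{DN(u_1)-DN(u_2)} u_2 + F(u_1)-F(u_2)$, invokes Proposition~\ref{p:para} for the linear evolution and Proposition~\ref{p:F} for the $F$-difference, and then splits $T_{DN(u_1)-DN(u_2)} u_2$ into four pieces. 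Your approach buys simplicity and requires only Lipschitz regularity with no harmonic-analysis machinery; the paper's approach buys a unified template that exports to problems where the direct symmetrizer computation is unavailable, and it exhibits the difference bound and the full energy estimate as two instances of the same paradifferential mechanism.

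One small correction to your sketch of the paradifferential alternative: for the term $T_{\partial_j(Gv)} u_2$ the paper does not use the commutator identity plus an integration by parts inside the energy pairing. It further decomposes the paracoefficient and then bounds the residual piece $T_{\partial_j(D\cA^j(u_2)(u_1-u_2))} u_2$ directly in $L^2$ via the Coifman--Meyer type estimate $\|T_{\partial f} g\|_{L^2} \lesssim \|f\|_{L^2}\|\nabla g\|_{BMO}$, which shifts the derivative onto the high-frequency factor without touching the energy inner product. Your proposed route would also close, but it is not the one the paper takes.
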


\begin{proof}
We have already seen in Proposition~\ref{p:para} that the paradifferential evolution is well-posed in $L^2$, and in Proposition~\ref{p:F} that we have a good Lipschitz bound for $F$. It remains to bound the remaining  difference
\[
\|T_{DN(u_1)-DN(u_2)} u_2\|_{L^2} \lesssim_A B \|u_1-u_2\|_{L^2}.
\]
For this we write
\[
\begin{aligned}
T_{DN(u_1)-DN(u_2)} u_2 = & \  T_{\cA^j(u_1)-\cA^j(u_2)} \partial_j u_2 
+ T_{D\cA^j(u_1) \partial_j u_1  - D\cA^j(u_2) \partial_j u_2} u_2
\\
 = & \  T_{\cA^j(u_1)-\cA^j(u_2)} \partial_j u_2 
+ T_{(D\cA^j(u_1) - D\cA^j(u_2)) \partial_j u_1} u_2
- T_{\partial_j D\cA^j(u_2) (u_1  -  u_2)} u_2 
\\
& \ + T_{\partial_j(D\cA^j(u_2) (u_1  -  u_2))} u_2  .
\end{aligned}
\]

For the first term we have a Coifman-Meyer type bound
\[
\|T_{\cA(u_1) - \cA(u_2)} \nabla u_2\|_{L^2} \lesssim \| u_1-u_2\|_{L^2}  \| \nabla u_2 \|_{BMO}
\lesssim B  \|u_1-u_2\|_{L^2}.
\]
The second term is even easier,
\[
\| T_{(D\cA^j(u_1) - D\cA^j(u_2)) \partial_j u_1} u_2\|_{L^2}
\lesssim \| (D\cA^j(u_1) - D\cA^j(u_2)) \partial_j u_1\|_{L^2} \|u_2\|_{L^\infty}
\lesssim_A B \|u_1-u_2\|_{L^2},
\]
and the third term is similar. Finally, in the fourth term we can 
use Coifman-Meyer to rebalance again the derivatives and obtain 
\[
\| T_{\partial_j(D\cA^j(u_2) (u_1  -  u_2)} u_2 \|_{L^2} \lesssim \| D\cA^j(u_2) (u_1  -  u_2)\|_{L^2} \| \nabla u_2\|_{BMO},
\]
concluding as before.
\end{proof}

\begin{remark}
The observant reader may have noticed that for our model problem the 
difference bound can be directly proved using a simple integration by 
parts, without any need for paradifferential calculus, and may wonder
why we are doing it this way.  There are three reasons for this:
(i) to show that it works, (ii) to show how both the bound for the full 
equation and the bound for the difference equation can be seen as 
two sides of the same coin, and (iii) to provide a guide for the reader
for situations where a simpler approach does not work.
\end{remark}

\begin{remark}\label{r:sigma-diff}
In the same vein as in Remark~\ref{r:sigma-lin}, bounds for the difference equation can be also obtained in all $H^\sigma$ spaces for $|\sigma| \leq s-1$.
\end{remark}

\begin{remark}
In our particular example it was easy to cast the difference equation 
in a form which is very much like the linearized equation. However, this is not always the case. For this reason, we point out that there is another way one can think of difference bounds, namely by viewing the two initial data $u_{01}$ and $u_{02}$
as being connected via a one parameter family of data $u_{0h}$ where $h \in [1,2]$.
Then we can interpret the difference $u_2-u_1$ as 
\[
u_2 - u_1 = \int_{1}^2 \frac{d}{dh} u_h\, dh,
\]
where $u_h$ are the solutions with data $u_{0h}$. Here the integrand represents 
a solution to the linearized equation around $u_h$. Hence difference bounds for $u_2-u_1$
can be obtained by integrating bounds for the linearized equation.
The only downside to such an argument is that such bounds will require the 
control parameters for the entire family of solutions, rather than just the end-points.

\end{remark}

\section{Existence of solutions}
\label{s:existence}

Here we consider the question of existence of solutions for the evolution
\eqref{gen-eq} with initial data in $H^s$, where $s$ will be taken sufficiently large.
The idea here is to construct a good sequence of approximate 
solutions $u^{n}$, which will eventually be shown to converge in a weaker 
topology. The tricky bit is to choose the correct iteration scheme.

Naively, one might think of trying to base such a scheme on the linearized flow, setting
\[
\partial_t (u^{n+1}-u^n) - DN(u^{n})(u^{n+1}-u^n) = - (\partial_t u^n - N(u^n)), \qquad (u^{n+1}-u^n)(0) = 0,
\]
where the expression on the right represents the error at step $n$. Here one can 
eliminate the time derivative of $u^n$ and rewrite this as
\[
\partial_t u^{n+1} - DN(u^{n})u^{n+1} =  N(u^n) - DN(u^n) u^n, \qquad u^{n+1}(0) = u_0.
\]
This would be akin to a Nash-Moser scheme, which, even when it works, loses
derivatives. That may be reasonable in a small divisor situation, but not so much
if our goal is to obtain a Hadamard style well-posedness result. Nevertheless,
Nash-Moser schemes have been used on occasion to produce solutions for  quasilinear
evolutions, though often they prove to be unnecessary.

\begin{remark}
We observe that for the existence  of solutions one does not need to work from the start at low regularity. As we will see, rough solutions can be constructed later on as limits of smooth solutions. This is strictly speaking not necessary in our model problem, but
for more nonlinear, geometric problems it does seem to make a difference.
This is because in such situations it is often easier to compare exact solutions via
the linearized equation which is a geometric object, instead of working with approximate solutions where the geometric character might be lost.
\end{remark}

We will present two strategies to prove existence, and at the end we 
point out several other methods which have been successfully used in existence proofs.

\subsection{Take 1: an iterative/fixed point construction}
In order not to lose derivatives in the approximation scheme, the idea here
is to carefully choose how to distribute $u^{n+1}$ and $u^n$ in the iteration. 
A key observation is that, whereas solving the linearized equation would cause a loss
of derivatives, solving the paradifferential equation does not in general.
Then, a good starting point would be the formulation \eqref{gen-eq-para} of the equations,
which would suggest the following iteration scheme:
\begin{equation}
\partial_t u^{n+1} - T_{DN(u^{n})} u^{n+1} =  F(u^n), \qquad u^{n+1}(0) = u_0.
\end{equation}
We will apply this scheme on a time interval $[0,T]$, with $T=T(M)$ sufficiently small
depending on the initial data size
\[
M := \|u_0\|_{H^s}.
\]

For the above sequence $u^{n}$ the aim would be to inductively prove two uniform bounds in $[0,T]$:
\begin{equation}
\| u^n \|_{L_t^\infty H_x^s} \leq CM    ,
\end{equation}
and 
\begin{equation}
\| u^{n+1} - u^{n}\|_{L_t^\infty L_x^2} \leq C(M) T \| u^{n} - u^{n-1}\|_{L_t^\infty L_x^2},
\end{equation}
where $C$ is a fixed large constant.
In the last bound, the time interval size $T$ is used in order to gain smallness for the constant, which is needed in order to obtain 
convergence. Together, these two bounds imply convergence in $L_t^\infty L_x^2$ to some function $u$, as well as  $L_t^\infty H_x^s$ regularity for the limit. This in general suffices in order to show that the limit solves the equation.

To obtain uniform bounds for this evolution one would need two pieces of information:
\begin{enumerate}
    \item Well-posedness of the paradifferential equation \eqref{gen-eq-para} in $L^2$ and more generally in 
    all $H^s$ spaces. Heuristically, the two should be equivalent, as the operator 
    $T_{DN(u^{n})}$ does not change the dyadic frequency localization. In practice 
    though it might not be as easy, as leakage to other frequencies may occur, and 
    in particular even the associated Hamilton flow might  not preserve the dyadic localization on a unit time scale.
    \item Lipschitz property of $F$ in Sobolev spaces. More generally, a bound of the form
    \begin{equation}\label{dF}
    \| F(u) - F(v) \|_{H^\sigma} \leq C(\| u\|_{H^s},\|v\|_{H^s}) \| u-v \|_{H^\sigma}, \qquad \sigma \geq 0,   
    \end{equation}
    which should be thought of as a Moser type inequality.
\end{enumerate}

In addition to uniform bounds in a strong norm $H^s$,
one would also like to have convergence in a weaker topology,
say $L^2$ for the purpose of this presentation. The difference 
equation reads
\begin{equation}\label{diff-eqq}
(\partial_t - T_{DN(u^{n})}) (u^{n+1}-u^{n})
=  F(u^n) - F(u^{n-1}) + (T_{DN(u^{n-1})}- T_{DN(u^{n})}) u^n.
\end{equation}
Here energy estimates in $L^2$ would follow from (1) and (2) above,
provided that the last difference has a good bound
\[
\| (T_{DN(u^{n-1})}- T_{DN(u^{n})}) u^n \|_{L^2} \lesssim 
C(\|u^{n-1}\|_{H^s},\|u^n\|_{H^s}) \| u_{n} - u_{n-1} \|_{L^2}.
\]
This is in general relatively straightforward if $s$ is large enough.

\begin{remark}
The argument above yields solutions which are apriori only in $L_t^\infty H_x^s$ 
as  opposed to $C(H^s)$, as desired. Getting continuity in $H^\sigma$ for $\sigma < s$
is relatively straightforward by interpolation, but proving continuity in $H^s$ requires  considerable extra work\footnote{e.g. by showing continuity in time of solutions to the linear paradifferential equation.} if one wants a direct argument. The easy way out is to rely on the arguments in the next section, where we show that all $H^s$ solutions can be  seen as uniform limits
of smooth solutions.
\end{remark}

\begin{remark}
The above iterative argument can be rephrased as a fixed point argument as follows.
For $u \in C[0,T;H^s]$ we define $Lu(t) : = v$ as the solution to 
\[
\partial_t v - T_{DN(u)} v =  F(u), \qquad v(0) = u_0
\]
Then the desired solution $u$ has to be a fixed point for $L$. Solutions to this fixed point problem may often be obtained using the contraction principle in the 
right topology. Precisely, the strategy is to choose the domain of $L$
to be  the ball $B(0,CM)$ in $L^\infty[0,T;H^s]$, but endow this ball with a weaker topology, e.g.  $C[0,T; L^2]$. Then both the mapping properties of $L$ and the small Lipschitz constant can be achieved by choosing the time $T$ sufficiently small. Here for the 
domain we have to choose $L^\infty$ rather than continuity in order
to guarantee completeness.
\end{remark}

\bigskip

We now implement this  scheme for our model problem. 
Denoting $M = \|u_0\|_{H^s}$, we will prove inductively that for fixed 
large enough $T$ and small enough $T$, we have the bound
\[
\| u^n \|_{C(0,T;H^s)} \leq CM.
\]
Taking this as induction hypothesis we have the following bounds for the control 
parameters $A^n$ and $B^n$ associated to $u^n$:
\[
A^n, B^n \lesssim CM.
\]
Then we can estimate $u^{n+1}$ in $H^s$ by combining Proposition~\ref{p:para}
and Proposition~\ref{p:F}
to obtain 
\[
\frac{d}{dt} \| u^{n+1}\|_{H^s}^2 \lesssim C(M)(1 + \| u^{n+1}\|_{H^s}^2) , 
\]
and by Gronwall's inequality we arrive at
\[
\| u^n \|_{C(0,T;H^s)} \lesssim M e^{C(M) T},
\]
with a universal implicit constant.
This completes the induction if we first choose $C$ large enough
(to dominate the implicit constant), and then $T$ small enough 
(depending on $C$ and $M$).

On the other hand, in order to prove the convergence in $L^2$
we use the equation \eqref{diff-eqq} for the difference $u^{n+1}-u^n$,
and claim that the following $L^2$ estimate holds:
\begin{equation}\label{l2-diff}
\frac{d}{dt} \| u^{n+1}-u^n\|_{L^2}^2     \lesssim C(M) \| u^{n+1}-u^n\|_{L^2}^2 
+ C(M) \| u^{n}-u^{n-1}\|_{L^2}^2 .
\end{equation}
Assuming this is true, by Gronwall's inequality we obtain
\[
\| u^{n+1}-u^n\|_{C(0,T;L^2)}     \lesssim C(M) T e^{ C(M)T}  \| u^{n}-u^{n-1}\|_{C(0,T;L^2)} ,
\]
which gives us the small Lipschitz constant if $T$ is sufficiently small, depending only on $M$.

It remains to prove \eqref{l2-diff}. For the paradifferential equation we can use Proposition~\ref{p:para} and for the $F$ difference we can use Proposition~\ref{p:F},
so it remains to examine the last term in \eqref{diff-eqq}, and show that
\[
\|(T_{DN(u^{n-1})}- T_{DN(u^{n})}) u^n\|_{L^2} \lesssim C(M) \|u^{n-1}-u^n\|_{L^2}.
\]
In the case of the model problem the difference on the left reads
\[
T_{\cA^j(u^{n-1}) - \cA^j(u^n)} \partial_j u^n + T_{D\cA^j(u^{n-1}) \partial_j u^{n-1}-
D\cA^j(u^{n}) \partial_j u^{n}} u^n.
\]
For the first term we have the obvious bound
\[
\| T_{{\cA}^j(u^{n-1}) - \cA^j(u^n)} \partial_j u^n\|_{L^2} \lesssim 
\| \cA^j(u^{n-1}) - \cA^j(u^n)\|_{L^2} \|\partial_j u^n\|_{L^\infty} \lesssim C(M)
\| u^{n-1}- u^n\|_{L^2}.
\]
The second term is split into three parts,
\[
T_{(D\cA^j(u^{n-1})-
D\cA^j(u^{n})) \partial_j u^{n}} u^n
-
T_{\partial_j D\cA^j(u^{n-1})( u^{n-1}-
u^{n})} u^n
+
T_{\partial_j [D\cA^j(u^{n-1})( u^{n-1}-
u^{n})]} u^n,
\]
where the first two parts are easy to estimate. A similar bound follows for the third term after we move the derivative onto the high frequency factor, using an estimate of the form
\[
\|T_{\partial f} g\|_{L^2} \lesssim \|f\|_{L^2} \|\partial g\|_{BMO},
\]
which is a corollary of the second bound in \eqref{CM}.

\subsection{Take 2: a time discretization method}

Here the idea is to discretize time at a small scale $\epsilon$,
and to construct approximate discrete solutions
$u^\epsilon(j\epsilon)$ with the following properties:

\begin{enumerate}[label=\roman*)]
    \item Uniform bounds
\begin{equation}\label{onestep-ee}
\| u^\epsilon(j\epsilon) \|_{H^s} \leq CM, \qquad j \ll_M \epsilon^{-1}   ; 
\end{equation}    
\item Approximate solution
\begin{equation}\label{onestep-approx}
\| u^\epsilon((j+1)\epsilon) - u^\epsilon (j\epsilon) - \epsilon N(u^\epsilon(j\epsilon)) \|_{L^2}
\lesssim \epsilon^2.
\end{equation} 
\end{enumerate}
Once this is done, if $s$ is large enough\footnote{For instance in our model case case $s > n/2+1$ suffices.} then it is a relatively straightforward matter
to show that a uniform limit $u$ exists\footnote{Here one may extend 
$u^\epsilon$ to all times by linear interpolation.} on a subsequence as $\epsilon \to 0$, by applying the Arzela-Ascoli theorem.
This works in a time interval $[0,T]$ with $T \ll_M 1$.
By passing to the limit in the above bounds in a weak topology, it follows the limit $u$ solves the equation and has regularity 
\[
u \in L^\infty(0,T;H^s) \cap \text{Lip}(0,T;L^2).
\]

The nice feature of this method is that one really only needs to carry out one single step. Precisely, given $u_0 \in H^s$ with size $M$, and $0 < \epsilon \ll 1 $, one needs to find $u_1$ (which corresponds to $u^\epsilon(\epsilon)$ above) with the following properties:
\begin{enumerate}[label=\roman*)']
    \item Uniform bounds
\begin{equation}\label{single-ee}
\| u_1 \|_{H^s} \leq (1+ C(M) \epsilon) \|u_0\|_{H^s};
\end{equation}    
\item Approximate solution
\begin{equation}\label{single-eqn}
\| u_1 - u_0 - \epsilon N(u_0) \|_{L^2}
\lesssim \epsilon^2.
\end{equation} 
\end{enumerate}
Reiterating this, the bound \eqref{onestep-ee} follows by applying a discrete form of Gronwall's inequality.

\begin{remark}
The $\epsilon^2$ bound in ii)' can be harmlessly replaced by $\epsilon^{1+\delta}$
with a small  constant $\delta > 0$. 
\end{remark}

\begin{remark}
Sometimes the square $H^s$ norm of $u$ is not the correct quantity to propagate in time, and one needs to replace it with  appropriate equivalent energies  $E^s$ in property (ii)'.
\end{remark}

\begin{remark}
The choice of the $L^2$ in (ii)' above was in order to keep the exposition simple. However, sometimes a different topology may be required by the problem, 
see e.g. \cite{tat-wm}, \cite{ait}.
\end{remark}

The remaining question is how to construct the single iterate satisfying properties 
(i)',(ii)' above. The obvious choice
would be Euler's method, which is to set
\[
u_1 = u_0 + \epsilon N(u_0),
\]
but this does not work because it loses derivatives.

Inspired by the nonlinear semigroup theory~\cite{Barbu}, one may choose instead to solve \[
u_1 - \epsilon N(u_1) = u_0.
\]
This idea has potential at least when this is an elliptic equation.  Alternatively one may opt for a paradifferential version
\[
u_1 - \epsilon T_{DN(u_0)} u_1  = u_0 + \epsilon F(u_0),
\]
which has the advantage that one only needs to solve a linear  elliptic equation. However, ellipticity is not guaranteed.

Instead, here we will adopt a two steps approach, which has the advantage that no partial differential equation needs to be solved. Precisely, our steps are as follows:

\bigskip

\emph{STEP 1: Regularization.}
Here we take the initial data $u_0$ and we regularize it on an $\epsilon$ dependent scale.
Precisely, if $k$ is the order of the nonlinearity $N$,  then it is natural to choose the  spatial truncation frequency scale to be $\epsilon^{-\frac1{2k}}$, which corresponds to an order 
$2k$ parabolic regularization; this regularization scale is needed in order to be able to bound the error in the Euler step. Then our regularization $\tu$ would
have the following properties:

\begin{enumerate}[label=(\alph*)]
\item Regularization
\begin{equation}\label{tu-high}
   \| \tu \|_{H^{s+k}} \lesssim  \epsilon^{-\frac12} \|u_0\|_{H^s}. 
\end{equation}
\item Energy bound
\begin{equation} \label{tu-ee}
E^s(\tu)  \leq (1+ C(M) \epsilon) E^s(u_0).
\end{equation}    
\item Approximate solution
\begin{equation}\label{tu-err}
\| \tu - u_0  \|_{L^2}
\lesssim \epsilon^2.
\end{equation} 
\end{enumerate}

\bigskip

\emph{STEP 2: Euler iteration.} Here we simply set 
\begin{equation}\label{u1-def}
u_1 = \tu + \epsilon N(\tu)   , 
\end{equation}
so that the approximate solution bound \eqref{single-eqn} becomes relatively straightforward, and the energy bound \eqref{single-ee} becomes akin to proving the energy estimate; see the example below.

\bigskip

We now implement the above strategy on our chosen model problem.
Here our chosen energy is simply the Sobolev norm,
\[
E^N(u) = \| u\|_{H^N}^2.
\]
Our equation has order $k=1$, so the proper regularization scale is $\delta x = \epsilon^\frac12$. Hence, we use a Littlewood-Paley projector to simply define 
\[
\tilde u = P_{<\epsilon^{-\frac12}} u,
\]
and the three properties (a), (b) and (c) above are trivially satisfied.

Next we turn our attention to the Euler iteration \eqref{u1-def}
for which we need to establish the properties (i)' and (ii)'.
We begin with (i)', where it suffices to compare the energies
of $u_1$ and $\tu$. For $|\alpha| \leq N$ we have  
\[
\partial^\alpha u_1 = \partial^\alpha \tilde u + \epsilon \partial^\alpha (\cA^j(\tu) \partial_j \tu).
\]
If $|\alpha| < N$, then in the second term on the right we have at most $N$ derivatives,
so this term has size $O(\epsilon)$ in the $L^2$ norm
\[
\| \partial^\alpha (\cA^j(\tu) \partial_j \tu) \|_{L^2} \lesssim_A \| \tu\|_{H^N},
\]
and we can neglect it.

It remains to consider $|\alpha|=N$. Then we can separate the terms with no more than $N$
derivatives and estimate them as above, using appropriate interpolation inequalities,
\[
 \partial^\alpha (\cA^j(\tu) \partial_j \tu ) =  \cA^j(\tu) \partial^\alpha \partial_j \tu
 + O_{L^2}(B \| \tu\|_{H^N}).
\]
Hence we have 
\[
\partial^\alpha u_1 = \partial^\alpha \tilde u + \epsilon \cA^j(\tu) \partial^\alpha \partial_j \tu + O_{L^2}(\epsilon),
\]
and, neglecting $O(\epsilon)$ terms, we compute $L^2$ norms,
\[
\|\partial^\alpha u_1\|_{L^2}^2 = \|\partial^\alpha \tilde u\|_{L^2}^2
+ 2\epsilon  \int \partial^\alpha \tu \cdot \cA^j(\tu) \partial^\alpha \partial_j \tu \, dx 
+ \epsilon^2 \|  A^j(\tu) \partial^\alpha \partial_j \tu \|^2_{L^2} .
\]
The last $L^2$ norm has size $O(\epsilon)$ in view of property (a) above.
In the integral, on the other hand, we use the symmetry of $\cA$ to integrate by parts,
\[
2\int \partial^\alpha \tu \cdot \cA^j(\tu) \partial^\alpha \partial_j \tu \, dx =
- \int   \partial^\alpha \tu   \cdot \partial_j \cA^j(\tu) \partial^\alpha  \tu \, dx  ,
\]
which can again be estimated by $\lesssim_A B \| \tu\|_{H^N}^2 $. Thus we obtain 
\[
\| u_1 \|_{H^N}^2  \lesssim_A     (1+ \epsilon B) \| \tu\|_{H^N}^2,
\]
as desired, as $B$ can be estimated by the Sobolev norm of $u_0$ by Sobolev embeddings.

It remains to consider (ii)', where, by (c) above, it suffices 
to show that 
\[
\| \cA^j(u) \partial_j u - \cA^j(\tu) \partial_j \tu \|_{L^2}
\lesssim_M \epsilon .
\]
This is a soft argument, where we simply write
\[
\| \cA^j(u) \partial_j u - \cA^j(\tu) \partial_j \tu \|_{L^2}
\lesssim_M \| \cA(u) - \cA(\tu)\|_{L^2} + \| \partial_j u - \partial_j \tu \|_{L^2} \lesssim_M \| u - \tu\|_{H^1},
\]
where the $H^1$ norm on the right is bounded by interpolating (c) 
above with the uniform $H^N$ bound provided by (b). This requires 
$N \geq 2$.

\subsection{Other strategies}
Most of the other strategies to prove existence of solutions are based on 
constructing approximate flows, and solutions are obtained as limits of 
solutions to the approximate flows. There are two such methods which are more widely used.

\bigskip
\emph{a) Parabolic regularization.} Here one uses a parabolic regularization of the original flow \eqref{gen-eq}, defining the approximate solutions $u^\epsilon$ by
\[
u^\epsilon_t = N(u^\epsilon) - \epsilon (-\Delta)^k u^\epsilon, \qquad u(0) = u_0,
\]
where the correct choice for the parabolic term seems to be to double 
the order of the original equation.  These problems can often 
be solved for a short, $\epsilon$ dependent time, as semilinear problems, with a direct,
fixed point argument. However, in doing this, the main challenge 
is to prove uniform in $\epsilon$ bounds for these approximate flows.
This sometimes requires more careful choices of the regularization term, to make it fit better with the geometry of the problem. 

\bigskip

\emph{b) Galerkin approximation.} Here the idea is to work with a low frequency  projector in the equation, e.g. of the type
\[
u_t = P_{<h} N(P_{<h} u)
\]
with $h \to \infty$, see e.g. the example in \cite{Taylor}. The local solvability for this evolution becomes trivial as this evolution is an ordinary differential equation in a Hilbert space, but the challenge is again to prove uniform in $\epsilon$ bounds for these approximate flows. The double use of the projector above is a choice that usually facilitates achieving this objective.
Depending on the problem, this may require careful choices for the frequency projectors,
adapted to the problem.

\section{Rough solutions as limits of smooth solutions}
\label{s:rough}

Here we explore the idea of constructing rough solutions as limits of smooth solutions.
There are at least two good reasons to do this, which we discuss in order:

\begin{enumerate}
    \item In quasilinear problems one does not expect any sort of uniformly continuous dependence of solutions on the initial data, so the continuity of the flow map
    becomes a purely qualitative assertion. However, one can still ask for a quantitative way of comparing solutions, and such a quantitative venue is found by using 
    the regular approximations as a convenient proxy. This is discussed in the last section.
    
 \item It is also often the case that more regular solutions are sometimes easier to 
 produce, and in such situations, obtaining the rough solutions as limits of smooth solutions might be the only option.  This is particularly the case in problems where
 the state space is not a linear space, such as Schr\"odinger maps \cite{McGahagan}, Yang-Mills  or other problems with a nontrivial gauge structure.  See also \cite{no-rel} for an implementation of this idea in  a free boundary problem. This is because in such problems it is always easier to 
 obtain estimates for the linearized equations, or at least to compare exact solutions, rather than to cook  up a constructive scheme which is consistent with the geometry.
 \end{enumerate}

To make this analysis quantitative, it is very useful to track the flow of energy
between different frequencies. Whereas energy cascades (energy migration to higher frequencies) have long been associated with blow-up phenomena, well-posedness should correspond to a lack thereof. To quantify this, we will use Tao's notion of 
frequency envelopes.

\subsection{Frequency envelopes }

Frequency envelopes, introduced by Tao (see for example \cite{tao2000global}), are a very useful device in order 
to track the evolution of the energy of solutions between dyadic energy shells.
As there is always nearby leakage between the dyadic shells in nonlinear flows,
on needs to do this in a more stable way, rather than look directly at the exact amount of energy in every shell. 

This is realized via the following definition:

\begin{definition}
We say that $\{c_k\}_{k\geq 0} \in \ell^2$ is a frequency envelope for a function $u$ in $H^s$
if we have the following two properties:

a) Energy bound:
\begin{equation}
\|P_k u\|_{H^s} \leq c_k   , 
\end{equation}

b) Slowly varying
\begin{equation}\label{fe-delta}
\frac{c_k}{c_j} \lesssim 2^{\delta|j-k|} , \quad j,k\in \mathbb{N}.
\end{equation}
\end{definition}
Here $P_k$ represent the standard Littlewood-Paley projectors, and $\delta$ is a positive constant, which is taken small enough in order to account for energy leakage between nearby frequencies. 

One can also try to limit from above the size of a frequency envelope, for instance 
by requiring that 
\[
\| u\|_{H^s}^2 \approx \sum c_k^2.
\]
We call such envelopes \emph{sharp}. Such frequency envelopes always exist, for instance one can take 
\[
c_k = \sup_j 2^{-\delta|j-k|} c_j.
\]
For a better understanding see Figure \ref{fig:fe} below, where the actual dyadic norms, indicated by red bullets on a logarithmic scale, are lifted (based on the above formula) to a slowly varying frequency envelope, indicated by the green circles.
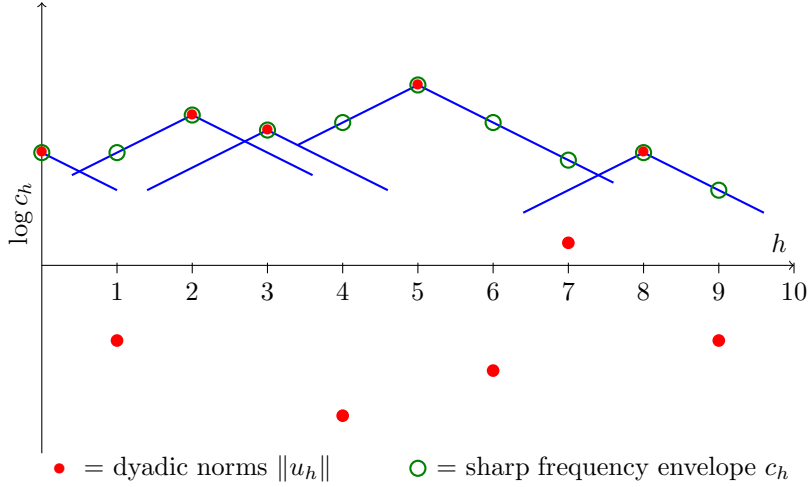
\begin{figure}
    \centering
  
    \begin{tikzpicture}
	\draw[->] (0,0) -- coordinate (x axis mid) (10,0);
    	\draw[->] (0,-2.5) -- coordinate (y axis top) (0,3.5);
    	\foreach \x in {1,...,10}
     		\draw (\x,1pt) -- (\x,-3pt)
			node[anchor=north] {\x};
    	

	
	\node at (9.8, .3) {$h$};
	\node[rotate=90, above=0.2cm] at (y axis top) {$\log c_h$};

	\draw[blue, thick]  (0,1.5)
    [sharp corners] -- (1,1);
    
    \draw[green1, thick] (0,1.5) circle [radius=0.1] node {{\color{red}$\bullet$}};

\draw[blue, thick] (0.4,1.2) -- (2,2)
    [sharp corners] -- (3.6,1.2);
    
    \draw[green1, thick] (2,2) circle [radius=0.1] node {{\color{red}$\bullet$}};

\draw[blue, thick] (1.4,1) -- (3,1.8)
    [sharp corners] -- (4.6,1);
    \draw[green1, thick] (3,1.8) circle [radius=0.1] node {{\color{red}$\bullet$}};
    
    \draw[blue, thick] (3.4,1.6) -- (5,2.4)
    [sharp corners] -- (7.6,1.1);
\draw[green1, thick] (5,2.4) circle [radius=0.1] node {{\color{red}$\bullet$}};

    \draw[blue, thick] (6.4,0.7) -- (8,1.5)
    [sharp corners] -- (9.6,.7);
\draw[green1, thick] (8,1.5) circle [radius=0.1] node {{\color{red}$\bullet$}};

\draw[red, fill, thick] (1,-1) circle [radius=0.07] node {};
\draw[green1, thick] (1,1.5) circle [radius=0.1];

\draw[red, fill, thick] (4,-2) circle [radius=0.07] node {};
\draw[green1, thick] (4,1.9) circle [radius=0.1] ;

\draw[red, fill, thick] (6,-1.4) circle [radius=0.07] node {};
\draw[green1, thick] (6,1.9) circle [radius=0.1] ;

\draw[red, fill, thick] (7,.3) circle [radius=0.07] node {};
\draw[green1, thick] (7,1.4) circle [radius=0.1] ;

\draw[red, fill, thick] (9,-1) circle [radius=0.07] node {};
\draw[green1, thick] (9,1) circle [radius=0.1] ;

\node at (2,-2.7) {{\color{red} $\bullet \ $} = dyadic norms $\Vert u_h\Vert$  };

\draw[green1, thick] (5,-2.7) circle [radius=0.1] node[anchor=west] { {\color{black} \ = sharp frequency envelope $c_h$}};
\end{tikzpicture}

\caption{Construction of sharp frequency envelopes. }

    \label{fig:fe}

\end{figure}

We will use frequency envelopes in order to track the evolution of energy in time as follows: we start with a sharp frequency envelope for the initial data, and then seek to show that we can propagate this frequency envelope to the solutions to our quasilinear flow, at least for a short time.

\begin{remark}
One alternative here is to unbalance the choice of $\delta$ in \eqref{fe-delta}, asking for 
a small $\delta$ if $k < j$, but replacing $\delta$ with a large constant for $k > j$.
This heuristically corresponds to a better control of leakage to higher frequencies,
and it is useful in order to deal with higher regularity properties also within the frequency envelope set-up.  
\end{remark}

\subsection{Regularized data}

Consider an initial data $u_0 \in H^s$ with size $M$, and let $\left\{c_k\right\}_{k\geq 0}$ be a sharp frequency envelope for $u_0$ in $H^s$.  For $u_0$ we consider a family of regularizations
$u_0^h \in H^\infty:=\cap _{s=0}^{\infty}H^s$ at frequencies $\lesssim 2^h$ where $h$ is a dyadic frequency parameter. This parameter 
can be taken either discrete or continuous, depending on whether 
we have access to difference bounds or only to the linearized equation.
Suppose we work with differences. Then the family $u^h_0$ can be taken 
to have similar properties to Littlewood-Paley truncations:

\begin{enumerate}[label=\roman*)]
\item Uniform bounds:
\begin{equation}
\| P_k u^h_0 \|_{H^s} \lesssim c_k.
\end{equation}

\item High frequency bounds:
\begin{equation}
\| u^h_0 \|_{H^{s+j}} \lesssim 2^{jh} c_h , \qquad j > 0.
\end{equation}

\item Difference bounds:
\begin{equation}
\| u^{h+1}_0- u^h_0 \|_{L^2} \lesssim 2^{-sh} c_h .
\end{equation}

\item Limit as $h \to \infty$:
\begin{equation}
u_0 = \lim_{h\to \infty} u_0^h  \qquad \text{ in } H^s.
\end{equation}
\end{enumerate}
Correspondingly, we obtain a family of smooth solutions $u^h$. 

Here in the simplest setting where the phase space is linear one may simply choose 
$u^h_0 = P_{< h} u_0$, which would have all the above properties. However, in geometric settings where the phase space is nonlinear, a more complex regularization method may be needed, for instance using a corresponding geometric heat flow, see \cite{tao-caloric}
or a variable scale regularization as in \cite{no-rel}.

\subsection{Uniform bounds }
Corresponding to the above family of regularized data, we obtain a family of smooth solutions $u^h$.  For this we can use the energy estimates as in Theorem~\ref{t:ee} to propagate Sobolev regularity for solutions as well as difference bounds as in Proposition~\ref{p:diff}. This yields a time interval $[0,T]$ where all these solutions
exist, and whose size $T$ depends only on $M = \|u_0\|_{H^s}$, where we have the following properties:

\begin{enumerate}[label=\roman*)]

\item High frequency bounds:
\begin{equation}\label{hf-bd}
\| u^h \|_{C(0,T;H^{s+j})} \lesssim 2^{jh} c_h , \qquad j > 0.
\end{equation}

\item Difference bounds:
\begin{equation}\label{diff-bd}
\| u^{h+1}- u^h\|_{C(0,T;L^2)} \lesssim 2^{-sh} c_h .
\end{equation}
\end{enumerate}

From \eqref{hf-bd} one may obtain a similar bound for the difference $u^{h+1}-u^h$.
Interpolating this with \eqref{diff-bd}, we also have 
\begin{equation}\label{interp-bd}
\| u^{h+1}- u^h\|_{C(0,T;H^m)} \lesssim  2^{-(s-m)h} c_h, \qquad m \geq 0. 
\end{equation}

One may use these bounds to establish uniform frequency envelope bounds
for $u^h$,
\begin{equation} \label{fe-bd}
\| P_k u^h \|_{C(0,T;H^{s})} \lesssim c_k 2^{-N(k-h)_+},
\end{equation}
on the same time interval which depends only on the initial data $H^s$ size.
This is a direct consequence of \eqref{hf-bd} for $k \geq h$, while if $k < h$
we can use the telescopic expansion
\[
u^h = u^k + \sum_{l = k}^{h-1}\left(  u^{l+1} - u^l\right),
\]
and use \eqref{hf-bd} for the first term and \eqref{diff-bd} for the differences.

\subsection{ The limiting solution} Consider now the convergence of $u^h$ as $h \to \infty$.
From the difference bounds \eqref{diff-bd} 
we obtain convergence in $L^2$ to a limit 
$u \in  C(0,T; L^2)$, with 
\[
\|u - u^h \|_{C(0,T;L^2)} \lesssim 2^{-sh}.
\]

On the other hand, expanding the difference as a telescopic sum we get
\[
u - u^h = \sum_{m = h}^\infty  u^{m+1} - u^m,
\]
where, in view of the above bounds \eqref{hf-bd} and \eqref{diff-bd}, 
each summand is essentially concentrated at frequency $2^m$, with $H^s$ size $c_m$ and exponentially decreasing tails.  This leads to 
\begin{equation}\label{rough-limit}
\| u - u^h \|_{C(0,T;H^s)} \lesssim c_{\geq h}:= \left(\sum_{m \geq h} c_m^2\right)^\frac12,
\end{equation}
so we also have convergence in $C(0,T;H^s)$. 

This type of argument plays multiple roles:
\begin{enumerate}
    \item It produces rough solutions as smooth solutions, justifying the earlier 
    assertion that it often suffices to carry out the initial construction of solutions only in a smooth setting.
    
    \item It establishes the continuity of solutions as $H^s$ valued flows, which is sometimes missing from the constructive proof of existence.
    
    \item It provides the quantitative bound \eqref{rough-limit} for the difference 
    between the rough and the smooth solutions, which plays a key role in the continuous dependence proof in the next section.
\end{enumerate}

\section{Continuous dependence }

Here we use frequency envelopes in order to prove continuous dependence
of the solution $u \in C(0,T;H^s)$ as a function of the initial data $u_0 \in H^s$,
and also discuss some historical alternatives.

\subsection{The continuous dependence proof}
Consider a sequence of initial data 
\[
u_{0j} \to u_0 \qquad in \ H^s, \quad s > \frac{d}{2}+1,
\]
and the corresponding solutions $u_j$, $u$ which exist with a uniform lifespan $[0,T]$,
where $T$ depends only on the initial data size $\|u_0\|_{H^s}$.
We will prove that $u_j \to u$ in $C(0,T;H^s)$. Once we have this property, it automatically extends to any larger time interval $[0,T_1]$ where the solution 
$u$ is defined and satisfies $u \in C(0,T_1;H^s)$.  This should be understood in the sense that for all large enough $j$, the solutions $u_j$ are defined in $[0,T_1]$,
with similar regularity, and the convergence holds as $j \to \infty$.

The difference bounds in Proposition~\ref{p:diff}  guarantee that $u_j \to u $ in $C(0,T;L^2)$. Since $u_j$ are uniformly bounded in $C(0,T;H^s)$, this also implies convergence in  $C(0,T;H^\sigma)$ for every $0 \leq \sigma < s$, but not for $\sigma = s$. 

It remains to consider the convergence in the strong topology, i.e. in $H^s$. 
Rather than trying to compare 
the solutions $u_j$ and $u$ directly, we will use as a proxy the approximate 
solutions $u_j^h$, respectively $u^h$. For these, we will take advantage 
of the fact that their initial data converge in all Sobolev norms,
\[
u_{0j}^h \to u_0^h  \qquad \text{in} \ H^\sigma, \quad 0 \leq \sigma < \infty.
\]
Hence, according to the preceding discussion, we have convergence of the regular solutions in all Sobolev norms, 
\[
u_{j}^h \to u^h  \qquad \text{in} \ C(0,T;H^\sigma), \quad 0 \leq \sigma < \infty.
\]
To compare the solutions $u$ and $u^j$ themselves, we use the triangle inequality,
\begin{equation}\label{diff-bd-j}
\| u_j - u \|_{C(0,T;H^s)} \lesssim \| u_j^h - u^h \|_{C(0,T;H^s)} +\| u^h - u \|_{C(0,T;H^s)}
+\| u_j^h - u_j \|_{C(0,T;H^s)}.
\end{equation}
The first term goes to zero as $j \to \infty$ for fixed $h$, while the second 
goes to zero as $h \to \infty$, but does not depend on $j$. It is the third term 
which is the problem, and for which we need to gain some smallness uniformly in $j$.

However, in the previous section we have learned to estimate such differences using frequency envelopes. Precisely, let $\left\{c_k\right\}_{k\geq 0}$, respectively $\left\{ c_k^j\right\}_{k\geq 0}$ be frequency envelopes
for the initial data $u_0$, respectively $u_0^j$ in $H^s$. Then, as we saw in the previous section, we can estimate the last two terms above in terms of frequency envelopes and obtain
\begin{equation}
\| u_j - u \|_{C(0,T;H^s)} \lesssim \| u_j^h - u^h \|_{C(0,T;H^s)} + c_{\geq h}
+ c^j_{\geq h}.
\end{equation}
The important observation is that the convergence $u_{0j} \to u_0$ in $H^s$ allows us to choose the frequency envelopes  $c$, respectively $c^j$ so that 
\[
c^j \to c \qquad \text{in } \ell^2.
\]
This implies that 
\[
\lim_{j \to \infty} c^j_{\geq h} = c_{\geq h}.
\]
Hence, passing to the limit $j \to \infty$ in the relation \eqref{diff-bd-j}, we obtain 
\begin{equation}
\limsup_{j \to \infty} \| u_j - u \|_{C(0,T;H^s)} 
\lesssim  c_{\geq h},
\end{equation}
and finally letting $h \to \infty$ we obtain 
\[
\lim_{j \to \infty} \| u_j - u \|_{C(0,T;H^s)} = 0,
\]
as desired.

\subsection{ Comparison with Kato and Bona-Smith}\
The more classical approach for continuous dependence goes back to Kato~\cite{Kato} as well as a variation due to Bona-Smith~\cite{BS}. We will briefly describe this approach using our notations and set-up; we caution the reader that the original arguments in these papers are not self-contained
and are instead mixed with the other parts of well-posedness proofs, so 
it is not exactly easy to correlate the papers with the description below.
In effect our discussion below is more closely based on the interpretations of Kato's 
work provided by Chemin~\cite{Chemin} and, even closer, by Tao~\cite{Tao}.

This also relies on the use of some sort of approximate solutions $u^h$. 
However, in this approach one aims to directly estimate 
the difference $u^h - u$ in $H^s$ in terms of the corresponding initial data.
 One might at first hope to directly track the 
difference $\| u^h - u\|_{C(0,T;H^s)}$, but this cannot work without knowledge 
that the low frequencies of the difference (i.e. below $2^h$) are better controlled.
So the better object to track turns out to be a norm of the form
\begin{equation}\label{control-norm-reg}
\| u^h - u\|_{H^s} + 2^{kh} \|  u^h - u\|_{H^{s-k}},  \end{equation}
where we recall that $k$ is the order of our nolinearity.
Here the second part can be estimated directly for any two $H^s$ solutions, see
Remark~\ref{r:sigma-diff}, so one can think of this as decoupled as a two step process. To better understand why this works, it is useful to write the equation 
for the difference $w= u^h-u$ in a paradifferential form
\begin{equation}\label{diff-eqn}
\partial_t w + T_{DN(u)} w = [F(u) - F(u^h)] + T_{DN(u)- DN(u^h)} u^h,
\end{equation}
which should essentially be thought of as a perturbation of the linear paradifferential flow, which can be estimated in all Sobolev spaces.
The $F$ difference is tame because $F$ admits Lipschitz bounds in all Sobolev spaces, so the issue is the last term. 

There there is seemingly a loss of $k$ derivatives, but these derivatives 
are applied to $u^h$, which has higher regularity bounds, so they yield losses of at most a $2^{kh}$ factor. But this factor can be absorbed by  the lower frequency  paradifferential coefficients given by $DN(u)- DN(u^h)$, in view of the $2^{kh}$ factor in \eqref{control-norm-reg}.
Here it is important that we wrote the equation using 
$T_{DN(u)}$ rather $T_{DN(u^h)}$ on the left, which allows us to use $u^h$ as the argument in the last term on the right.

In Kato's argument the same principle is used to get $H^s$ bounds not only 
for the difference $u^h - u$ but also for $u^h-v$ for an arbitrary solution $v$.
In Bona-Smith's, version, on the other hand, one estimates only $u^h - u$, but
the proof is more roundabout in that $u^h$ is not only assumed to have regularized data, but also to solve a regularized equation, thus combining the existence and the continuous dependence arguments.

In our opinion, working with frequency envelopes has definite advantages:
\begin{itemize}
    \item It provides more accurate information on the solutions.
    \item It does not require any direct difference bounds in the strong $H^s$
    topology.
    \item By working with a continuous, rather than a discrete family of regularizations
    one can fully replace difference estimates by bounds for the linearized equation, 
    which is to be preferred in many cases, in particular in geometric contexts
    where the state space is an infinite dimensional manifold.
\end{itemize}

 \bibliographystyle{abbrv}

\end{document}